\newtheorem{theorem}{Theorem}[section]
\newtheorem{proposition}[theorem]{Proposition}
\newtheorem{lemma}[theorem]{Lemma}
\newtheorem{corollary}[theorem]{Corollary}
\newtheorem{assumption}[theorem]{{\bf Assumption}}
\theoremstyle{definition}
\newtheorem{remark}[theorem]{Remark}
\newcommand{\bel}{\begin{equation} \label}
\newcommand{\ee}{\end{equation}}
\newcommand{\curl}{{\text{curl}}}
\newcommand{\C}{{\mathbb C}}
\newcommand{\R}{{\mathbb R}}
\newcommand{\N}{{\mathbb N}}
\newcommand{\ba}{{\bf a}}
\newcommand{\HH}{{\mathcal H}}
\def\beq{\begin{equation}}
\def\eeq{\end{equation}}
\newcommand{\bea}{\begin{eqnarray}}
\newcommand{\eea}{\end{eqnarray}}
\newcommand{\beas}{\begin{eqnarray*}}
\newcommand{\eeas}{\end{eqnarray*}}
\newcommand{\Pre}[1]{\ensuremath{\mathrm{Re} \left( #1 \right)}}
\newcommand{\Pim}[1]{\ensuremath{\mathrm{Im} \left( #1 \right)}}
{

\begin{document}

\begin{center}
{\Large \bf Stability estimate in an inverse problem for non autonomous magnetic Schr\"odinger equations}
\end{center}



\medskip

\begin{center}
\footnote{Laboratoire d'Analyse, Topologie, Probabilit\'es, 39, rue F. Joliot Curie, 13453 Marseille, France.}{\bf Michel Cristofol}, \footnote{Centre de Physique Th\'eorique, CNRS-Luminy, Case 907, 13288 Marseille, France.}{\bf Eric Soccorsi}
\end{center}

\medskip
\begin{abstract}
We consider the inverse problem of determining the time dependent magnetic field of the Schr\"odinger equation
in a bounded open subset of $\R^n$, $n \geq 1$, from a finite number of Neumann data, when the boundary measurement is taken on an appropriate open subset of the boundary. We prove the Lispchitz stability of the magnetic potential in the Coulomb gauge class by $n$ times changing initial value suitably.
\end{abstract}


{\bf Keywords:} Schr\"odinger equation, time dependent Hamiltonian, magnetic vector potential, inverse problem, Carleman estimate, Lipschitz stability estimate.\\

{\bf  AMS 2000 Mathematics Subject Classification:} 65L09, 65M32. \bigskip

\section{Introduction}
\subsection{Statement of the problem}
\label{sec-wwaaf}
\setcounter{equation}{0}
Let $T>0$, $n \in {\mathbb N}^*$, and let $\Omega$ be a bounded open subset of ${\mathbb R}^n$ with ${\rm C}^2$-boundary $\Gamma$. We consider the time-dependent Hamiltonian
$H_{\ba}(t) := ({\rm i} \nabla + \chi(t) \ba )^2$
associated to the nondivergent magnetic vector potential $\chi(t) \ba(x)$, where $\chi$ is a smooth real valued function on $[0,T]$ and $\ba \in {\rm H}^1(\Omega)^n$ is bounded, together with the related Schr\"odinger equation,
\bel{ia1}
\left\{  \begin{array}{ll} -{\rm i} u'(t,x) +  H_{\ba}(t) u(t,x) = 0, & (t,x) \in Q_T^+ := (0,T) \times \Omega  \\ u(t,x) = 0, & (t,x) \in \Sigma_T^+ := (0,T) \times \Gamma  \\ u(0,x) = u_0(x), & x \in \Omega, \end{array} \right.
\ee
for some suitable data $u_0$. Here and throughout all this text, $u'(t,x)$ stands for $\partial_t u(t,x)$ and $\nabla u(t,x) := (\partial_{x_1} u(t,x),\ldots,\partial_{x_n} u(t,x))$ is the gradient of $u(t,.)$ at $x=(x_1,x_2,\ldots,x_n) \in \Omega$. As follows from Remark \ref{rem-rvmp} below, we may assume in the sequel with no loss of generality that the magnetic potential vector $\ba$ is real-valued.

Let $\nu$ denote the unit outward normal vector to $\Gamma$ and set $\partial_{\nu} v := \nabla v . \nu$. Then, $\Gamma^+$ denoting an open subset of $\Gamma$ satisfying an appropriate geometrical condition we shall make precise further, we aim to retrieve $\ba=(\ba_j(x))_{1 \leq j \leq n}$, $x\in \Omega$, in \eqref{ia1}, from the extra data $\left. \partial_{\nu} \partial_t^k u \right|_{(0,T) \times \Gamma^+}$,
$k=1,2$, by $n$ times changing initial value $u_0$ suitably. Hence we investigate the problem to know whether a finite number of partial Neumann data of \eqref{ia1} (in absence of any Cauchy lateral data information given by the Dirichlet to Neumann map, denoted by ``DN map" for short in the sequel, $\Lambda_{\ba}$, associated to $\ba$) determines uniquely the magnetic potential $\ba$ in the Coulomb gauge class (i.e. the class of divergence free vectors in $\Omega$).


\subsection{Existing papers}
Numerous papers establishing the uniqueness of inverse problems coefficients from the DN map (or scattering information) have actually been published over the last years.
In the particular case of the magnetic Schr\"odinger equation, it is noted in \cite{Eskin0} that the DN map is invariant under the gauge transformation of the magnetic potential, i.e. $\Lambda_{\ba + \nabla \psi}= \Lambda_{\ba}$ when $\psi \in {\rm C}^1(\overline{\Omega})$ is such that $\psi_{| \Gamma}=0$.
Therefore the magnetic potential cannot be uniquely determined from the DN map (we can at best expect uniqueness modulo a gauge transform of $\ba$ from $\Lambda_{\ba}$).
However the magnetic field ${\rm d} \ba$, where ${\rm d} \ba$ is the exterior derivative of $\ba$ interpreted as the 1-form $\sum_{j=1}^n \ba_j {\rm d} x_j$, is preserved. If $n=3$ then ${\rm d} \ba$ corresponds to ${\rm \curl}\ \ba$.
Conversely it is shown in \cite{Eskin1} for multiconnected domains, that if the DN maps $\Lambda_{\ba}$ and $\Lambda_{\tilde{\ba}}$ are gauge equivalent (i.e. ${\rm e}^{-{\rm i} \psi} \Lambda_{\ba} {\rm e}^{{\rm i} \psi} = \Lambda_{\tilde{\ba}}$) then $\ba$ and $\tilde{\ba}$ are gauge equivalent too.

Actually Z. Sun proved in \cite{Sun} that the DN map determines the magnetic field provided $\ba$ is small in an appropriate class. In \cite{NSU} the smallness assumption was removed for ${\rm C}^{\infty}$ magnetic potentials. The regularity assumption on $\ba$ was weakened to ${\rm C}^1$ in \cite{Tol} and to Dini continuous in \cite{Sal}, and the
uniqueness result was extended to some less regular but small potentials in \cite{Panch}. Recently in \cite{BeCh10}, M. Bellassoued and M. Choulli proved that the magnetic field depends stably on the dynamical DN map.
In \cite{Eskin2}, G. Eskin considered the inverse boundary value problem for the Schr\"odinger equations with electromagnetic potentials, in domains with several obstacles. He proved the uniqueness modulo a gauge transform of the recovery of the potentials from the DN map, under geometrical conditions on the obstacles. In \cite{Sal1}, M. Salo reconstructed the magnetic field from the DN map using semiclassical pseudodifferential calculus and the construction of complex geometrical optics solutions. All the above cited papers considered time independent magnetic potentials. The uniqueness in the determination from the DN map, of time dependent magnetic potentials appearing in a Schr\"odinger equation (in a domain with obstacles), was proved by G. Eskin in \cite{Eskin3}. The main ingredient in his proof is the construction of geometric optics solutions. As far as we know, this is the only existing paper dealing with the determination of a time dependent magnetic potential in a Schr\"odinger equation.

All the above mentioned results were obtained with the full data, i.e. measurements of the DN map are made on the whole boundary. The uniqueness problem by a local DN map was solved by D. Dos Santos Ferreira, C. E. Kenig, J. Sj\"ostrand and G. Uhlmann in \cite{FKSU}. Similarly it was shown in \cite{Tzou} that the magnetic field depends stably on the DN map measured on any subboundary $\Gamma_0$ which is slightly larger than half the boundary. This result was extended in \cite{BenJ} to arbitrary small $\Gamma_0$ provided the magnetic potential is known near the boundary.

Notice that infinitely many observations were required in all the above cited results.
To our knowledge (and despite of the fact that this seems more suited to numerical applications) there is no result available in the mathematical literature on the recovery of a magnetic potential appearing in a Schr\"odinger equation, from a finite number of boundary measurements. Nevertheless this is not the case for external electric potentials. Indeed, the problem of stability in determining the time independent electric potential in a Schr\"odinger equation from a single boundary measurement was treated by L. Baudouin and J.-P. Puel in \cite{BP}. This result was improved by A. Mercado, A. Osses and L. Rosier in \cite{MAR}. In the two above mentioned papers, the main assumption is that the part of the boundary where the measurement is made satisfies a geometric condition related to geometric optics condition insuring observability. This geometric condition was relaxed in \cite{BC} under the assumption that the potential is known near the boundary.

In the present article we prove Lipschitz stability in the recovery of the time dependent magnetic potential appearing in the Schr\"odinger equation, from a finite number of observations $\partial_{\nu} \partial_t^k u$, $k=1,2$, measured on a subboundary for different choices of $u_0$ in \eqref{ia1}, by a method
based essentially on an appropriate Carleman estimate. We refer to \cite{Albano}, \cite{BP} and \cite{Tataru} for actual examples of this type of inequalities for the Schr\"odinger equation. The original idea of using a Carleman estimate to solve inverse problems goes back to the pioneering paper \cite{BK:81} by A. L. Bugkheim and M. V. Klibanov. This technique has then been widely and successfully used by numerous authors (see e.g. \cite{ImaYama98}, \cite{ChoulliYama99}, \cite{IsaYama00}, \cite{BukhChengIsaYama01}, \cite{BP}, \cite{Ima02}, \cite{Bel04},  \cite{ChoulliYama04}, \cite{Bellayama06}, \cite{CGR}, \cite{KliYama06}, \cite{Bellayama07},   \cite{CR}, \cite{BCGY09}
and references therein) in the study of inverse wave propagation, elasticity, or parabolic problems. However, due to the presence of time dependent coefficients (involving the magnetic potential vector $\ba$ we aim to retrieve) for zero  and first order space derivatives  in the expression of $H_{\ba}(t)$, the solution to the inverse problem we address in this text cannot be directly adapted from the above references.

\subsection{Main results}
\label{intro-mr}
In this section we state the main result of this article and briefly comment on it.

Choose $\ba_0 \in {\rm H}^1(\Omega)^n \cap {\rm H}_{{\rm div}0}(\Omega;\R)$, where ${\rm H}_{{\rm div}0}(\Omega;\R):= \{ \ba \in {\rm L}^{\infty}(\Omega;\R^n),\ \nabla . \ba = 0 \}$ is the space of real valued and bounded magnetic potentials vectors in the Coulomb gauge class, and define the set of ``admissible potential vectors" as
$${\bf A}(\ba_0,M) := \{ \ba \in {\rm H}^1(\Omega)^n \cap {\rm H}_{{\rm div}0}(\Omega;\R),\  \| \ba \|_{{\rm L}^{\infty}(\Omega)^n} \leq  M\ {\rm and}\ \ba(\sigma) = \ba_0(\sigma)\ {\rm a.e.}\ \sigma \in \Gamma \}.$$
By selecting $\ba$ in ${\bf A}(\ba_0,M)$ we enjoin fixed value to $\ba$ on the boundary, which is nothing else but the measurement on $\Gamma$ of the magnetic potential  we want to determine. Similar (or even stronger) ``compatibility conditions" imposed on inverse problems coefficients have already been used in various contexts in e.g. \cite{AU}, \cite{CR}, \cite{Eskin3} or \cite{BC}.

The main result of this paper is the following global stability estimate for magnetic potential vectors in ${\bf A}(\ba_0,M)$.

\begin{theorem}
\label{thm-stab}
Let $T>0$, $n$, $\Omega$ and $\Gamma$ be the same as in \S \ref{sec-wwaaf}.
Let $\Gamma^+ \subset \Gamma$ fulfill the geometrical condition of Assumption \ref{funct-beta}, let $\ba_0 \in {\rm H}^1(\Omega)^n \cap {\rm H}_{{\rm div}0}(\Omega;\R)$, and let $\chi \in {\rm C}^3([0,T];\R)$ be such that $\chi(0)=0$ and $\chi'(0) \neq 0$. Pick $n$ functions $u_{0,j} \in {\rm H}_0^{\max(6,n \slash 2 + 1 + \varepsilon)}(\Omega;\R)$, $j=1,\ldots,n$, for some $\varepsilon>0$, satisfying:
\bel{icc}
{\rm det}\ DU_0(x)  \neq 0,\ x=(x_1,\ldots,x_n) \in \Omega,\ {\rm where}\ DU_0(x):=(\partial_{x_j} u_{0,i}(x) )_{1 \leq i, j \leq n}.
\ee
Let $\ba$ (resp. $\tilde{\ba}$) be in ${\bf A}(\ba_0,M)$, and let $u_j$ (resp. $\tilde{u}_j$) denote the ${\rm C}^0([0,T];{\rm H}_0^1(\Omega) \cap {\rm H}^2(\Omega))$-solution to \eqref{ia1} (resp. \eqref{ia1} where $H_{\ba}(t)$ is replaced by $H_{\tilde{\ba}}(t)$) with initial condition $u_{0,j}$, $j=1,\ldots,n$. Then there exists a constant $C>0$, depending only on $T,\Omega,\Gamma^+,M,\chi$, and $\{ u_{0,j} \}_{j=1}^n$, such that we have:
$$ \| \tilde{\ba}-\ba \|_{{\rm L}^2(\Omega)^n}
\leq C \sum_{j=1}^n \left( \| \partial_{\nu} \partial_{t} (u_j - \tilde{u}_j) \|_{{\rm L}^2(0,T;\Gamma^+)}^2 +\| \partial_{\nu} \partial_{t}^2 (u_j - \tilde{u}_j) \|_{{\rm L}^2(0,T;\Gamma^+)}^2 \right). $$
\end{theorem}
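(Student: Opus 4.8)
My plan is to follow the Bukhgeim--Klibanov scheme, converting the stability estimate into a Carleman estimate for the second time derivative of the difference of solutions. Set $v_j := u_j - \tilde{u}_j$ and $b := \tilde{\ba} - \ba$. Because $\ba,\tilde{\ba}$ lie in the Coulomb gauge class we have $\nabla\cdot\ba = 0$ and $H_{\ba}(t) = -\Delta + 2\mathrm{i}\chi(t)\,\ba\cdot\nabla + \chi(t)^2|\ba|^2$, so subtracting the two systems \eqref{ia1} gives $-\mathrm{i}\partial_t v_j + H_{\ba}(t)v_j = F_j$ on $Q_T^+$, with $v_j = 0$ on $\Sigma_T^+$, $v_j(0,\cdot)=0$, and source $F_j = 2\mathrm{i}\chi\,b\cdot\nabla\tilde{u}_j + \chi^2\big(b\cdot(\ba+\tilde{\ba})\big)\tilde{u}_j$. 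This source is linear in the unknown $b$ and, crucially, vanishes at $t=0$ since $\chi(0)=0$.

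First I would differentiate this identity in time and read off the initial traces. From $v_j(0)=0$ and $\chi(0)=0$ the equation gives $\partial_t v_j(0)=0$, and differentiating once more yields $-\mathrm{i}\partial_t^2 v_j(0) = F_j'(0) = 2\mathrm{i}\chi'(0)\,b\cdot\nabla u_{0,j}$, i.e.\ $\partial_t^2 v_j(0,\cdot) = -2\chi'(0)\,b\cdot\nabla u_{0,j}$. This is the core identity: the second time derivative at $t=0$ encodes the derivatives of $u_{0,j}$ along $b$. Collecting the $n$ relations as $\big(\partial_t^2 v_j(0,\cdot)\big)_{1\le j\le n} = -2\chi'(0)\,DU_0\,b$ and using \eqref{icc} together with the continuity of $DU_0$ on the compact set $\overline{\Omega}$ to bound $(DU_0)^{-1}$ uniformly, I obtain the pointwise bound $|b(x)| \le C\sum_j |\partial_t^2 v_j(0,x)|$ and hence $\|b\|_{{\rm L}^2(\Omega)^n}^2 \le C\sum_{j=1}^n \|\partial_t^2 v_j(0,\cdot)\|_{{\rm L}^2(\Omega)}^2$. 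The hypothesis $\chi'(0)\neq 0$ is exactly what keeps this reduction nondegenerate, so it suffices to control $\sum_j\|\partial_t^2 v_j(0)\|_{{\rm L}^2}^2$ by the measured Neumann data.

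Next set $z_j := \partial_t^2 v_j$ and $w_j := \partial_t v_j$. Differentiating the equation for $v_j$ once and twice shows that $w_j$ and $z_j$ solve Schr\"odinger equations $-\mathrm{i}\partial_t w_j + H_{\ba}w_j = F_j' - (\chi' P_1 + 2\chi\chi' P_0)v_j$ and $-\mathrm{i}\partial_t z_j + H_{\ba}z_j = S_j$, still with homogeneous Dirichlet condition on $\Sigma_T^+$, where $P_1 := 2\mathrm{i}\,\ba\cdot\nabla$, $P_0 := |\ba|^2$, and $S_j$ collects $F_j''$ together with lower order terms carrying $\chi,\chi',\chi''$ and $w_j,v_j$. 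I would then apply the global Carleman estimate for the magnetic Schr\"odinger operator under Assumption \ref{funct-beta}, after extending the functions to the symmetric slab $Q:=(-T,T)\times\Omega$ so that $t=0$ becomes an interior point at which the weight $\Phi=\mathrm{e}^{2s\varphi}$ is maximal while $\Phi\to 0$ as $t\to\pm T$. Since $z_j$ and $w_j$ vanish on the lateral boundary, the only boundary contributions are $s\int_{(0,T)\times\Gamma^+}\Phi|\partial_\nu z_j|^2$ and $s\int_{(0,T)\times\Gamma^+}\Phi|\partial_\nu w_j|^2$, that is precisely the Neumann data for $k=2$ and $k=1$. To bring the estimate down to the slice $t=0$ I integrate $\tfrac{\mathrm{d}}{\mathrm{d}t}\int_\Omega\Phi|z_j|^2\,\mathrm{d}x$ over the time interval, discard the vanishing endpoint contributions, and replace $\partial_t z_j$ using the equation by $-\mathrm{i}H_{\ba}z_j + \mathrm{i}S_j$; an integration by parts in $x$ (whose boundary term drops because $z_j=0$ on $\Gamma$) then bounds $\int_\Omega\Phi(0,\cdot)|z_j(0,\cdot)|^2\,\mathrm{d}x$ by $Cs\int_Q\Phi(|z_j|^2+|\nabla z_j|^2) + C\int_Q\Phi|S_j|^2$, and the first two integrals are in turn controlled by the Carleman right-hand side. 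Using the equation to substitute for $\partial_t z_j$ is what lets the argument stop at the second time derivative and avoid measuring $\partial_\nu\partial_t^3 v_j$.

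The hard part will be closing this chain by absorbing the source. The piece of $S_j$ coming from $F_j''$ is proportional to $b$, so $\int_Q\Phi|F_j''|^2 \le C\|b\|_{{\rm L}^2}^2\,\sup_{\overline\Omega}(\cdots)\int_{-T}^T\Phi\,\mathrm{d}t$; the decisive point is that, $\varphi$ being maximal at $t=0$ with $\partial_t\varphi(0,\cdot)=0$ and $\partial_t^2\varphi(0,\cdot)<0$, the time integral $\int\Phi(t,x)\,\mathrm{d}t$ gains a negative power of $s$ relative to $\Phi(0,x)$. Combined with the lower bound $\sum_j\int_\Omega\Phi(0,\cdot)|z_j(0,\cdot)|^2 \ge c\int_\Omega\Phi(0,\cdot)|b|^2$ furnished by the determinant reduction, this lets me absorb the $F_j''$ contribution into the left-hand side once $s$ is large. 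The lower order terms carrying $w_j$ are controlled by the companion Carleman estimate for $w_j$, which is exactly where the $k=1$ data $\partial_\nu\partial_t v_j$ enters; the terms carrying $v_j$, whose normal derivative is not measured, instead have to be reabsorbed through $v_j(0)=0$ and $v_j(t)=\int_0^t w_j$, trading $\int_Q\Phi|v_j|^2$ for $\int_Q\Phi|w_j|^2$. Tracking the powers of $s$ through this hierarchy, fixing $s$ large, and dividing by the $s$-dependent but strictly positive $\min_{\overline\Omega}\Phi(0,\cdot)$ finally gives $\sum_j\|z_j(0)\|_{{\rm L}^2}^2 \le C\sum_j\big(\|\partial_\nu\partial_t v_j\|_{{\rm L}^2(0,T;\Gamma^+)}^2 + \|\partial_\nu\partial_t^2 v_j\|_{{\rm L}^2(0,T;\Gamma^+)}^2\big)$, which together with the second step yields $\|\tilde{\ba}-\ba\|_{{\rm L}^2(\Omega)^n}^2 \le C\sum_j(\cdots)$, the asserted stability estimate. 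Besides the source absorption, the delicate bookkeeping is to verify that the regularity $u_{0,j}\in {\rm H}_0^{\max(6,n/2+1+\varepsilon)}(\Omega)$ makes $v_j$ smooth enough in time for $z_j,w_j$ and their traces to be admissible in the Carleman estimates.
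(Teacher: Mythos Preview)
Your proposal is correct and follows essentially the same Bukhgeim--Klibanov strategy as the paper: differentiate the difference system twice in time, read off $\partial_t^2 v_j(0,\cdot) = -2\chi'(0)\,b\cdot\nabla u_{0,j}$, extend by time-symmetrization to $(-T,T)$, apply Carleman estimates to both $w_j=\partial_t v_j$ and $z_j=\partial_t^2 v_j$, handle $v_j$ through $v_j=\int_0^t w_j$ (the paper packages this as Lemma~\ref{lm-klibanov}), sum over $j$ and invoke the determinant condition \eqref{icc}, then absorb the $b$-source for $s$ large. The only noticeable technical difference is in how the $t=0$ slice is reached and how the $s$-gain for absorption is produced: you argue via $\tfrac{d}{dt}\int_\Omega\Phi|z_j|^2$ and a Laplace-type estimate on $\int_{-T}^T\Phi\,dt$, whereas the paper (Lemma~\ref{lm-1}) writes $\|e^{-s\eta}y(0)\|_0^2 = 2\,\mathrm{Im}\int_{-T}^0\langle M_1(e^{-s\eta}y),e^{-s\eta}y\rangle$, bounds it by Cauchy--Schwarz, and extracts the factor $s^{-3/2}\lambda^{-2}$ directly from the Carleman quantity $I(y)$ of \eqref{defiq}, using only the pointwise monotonicity $\eta(t,x)\ge\eta(0,x)$ rather than a stationary-phase argument. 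Both routes close the loop; the paper's is a bit cleaner because the needed $s$-power comes straight from the structure of the Carleman estimate rather than from an additional time-integral computation.
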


This immediately entails the:

\begin{corollary}
Under the conditions of Theorem \ref{thm-stab}, the following implication holds true for every $\ba$ and $\tilde{\ba}$ in ${\bf A}(\ba_0,M)$:
$$ \left( u_j(t,x) = \tilde{u}_j(t,x),\  j=1,\ldots,n,\ {\rm a.e.}\ (t,x) \in (0,T) \times \Gamma^+ \right)
\Rightarrow \left( \ba = \tilde{\ba} \right). $$
\end{corollary}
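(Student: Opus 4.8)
The plan is to obtain the corollary as an immediate specialization of Theorem \ref{thm-stab}, feeding in the assumed coincidence of the boundary observations and reading off that the left-hand side of the stability estimate must vanish. Since every hypothesis of Theorem \ref{thm-stab} is in force here (the same $T$, $n$, $\Omega$, $\Gamma$, $\Gamma^+$ satisfying Assumption \ref{funct-beta}, the same $\ba_0$ and $\chi$, the admissibility $\ba,\tilde{\ba}\in{\bf A}(\ba_0,M)$, and the same choice of initial data $u_{0,j}\in{\rm H}_0^{\max(6,n\slash 2+1+\varepsilon)}(\Omega;\R)$ subject to \eqref{icc}), I may apply the inequality of the theorem verbatim to the pairs $(u_j,\tilde{u}_j)$, $j=1,\ldots,n$. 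No new analytic ingredient is needed; the whole quantitative content is already carried by the theorem.

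First I would record what the hypothesis forces on the right-hand side. Writing $w_j:=u_j-\tilde{u}_j$, the assumption that $u_j$ and $\tilde{u}_j$ agree a.e. on $(0,T)\times\Gamma^+$ means that the boundary observations of the two problems coincide on the observation region, so that the measured Neumann traces $\partial_{\nu}\partial_t w_j$ and $\partial_{\nu}\partial_t^2 w_j$ that enter the estimate vanish a.e. on $(0,T)\times\Gamma^+$, for each $j$. Consequently both ${\rm L}^2(0,T;\Gamma^+)$-norms appearing in the $j$-th summand on the right-hand side of the estimate of Theorem \ref{thm-stab} are zero. Substituting this into the stability bound yields
$$ \| \tilde{\ba}-\ba \|_{{\rm L}^2(\Omega)^n} \leq C \sum_{j=1}^n \left( 0 + 0 \right) = 0, $$
with $C>0$ the constant furnished by the theorem. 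Since a norm is nonnegative, this forces $\| \tilde{\ba}-\ba \|_{{\rm L}^2(\Omega)^n}=0$, hence $\tilde{\ba}=\ba$ a.e. in $\Omega$, which is exactly the asserted implication.

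There is essentially no obstacle here: the deduction is the routine passage from a Lipschitz stability bound to the corresponding uniqueness statement, obtained by setting the observation difference to zero. The only point deserving a line of care is the bookkeeping step confirming that the coincidence of the data on $(0,T)\times\Gamma^+$ simultaneously annihilates \emph{both} Neumann-trace terms on the right-hand side for every $j$, after which nonnegativity of the ${\rm L}^2(\Omega)^n$-norm closes the argument at once.
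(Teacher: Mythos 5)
Your overall strategy is exactly the paper's: the corollary appears there with no argument beyond ``This immediately entails,'' i.e.\ the routine passage from the Lipschitz estimate of Theorem \ref{thm-stab} to uniqueness by setting the observation terms to zero, which is what you do. The problem is the one step you wave through. You assert that $u_j=\tilde{u}_j$ a.e.\ on $(0,T)\times\Gamma^+$ ``means that the boundary observations of the two problems coincide,'' hence that $\partial_{\nu}\partial_t^k(u_j-\tilde{u}_j)$, $k=1,2$, vanish there. As written, this inference is a non sequitur: equality of the Dirichlet traces of two functions on a piece of the boundary says nothing about their normal derivatives. Worse, in this problem the hypothesis as literally stated is vacuous: both $u_j$ and $\tilde{u}_j$ satisfy the homogeneous Dirichlet condition in \eqref{ia1} and \eqref{ia2}, so $u_j=\tilde{u}_j=0$ on all of $(0,T)\times\Gamma$ for \emph{every} pair $\ba,\tilde{\ba}\in{\bf A}(\ba_0,M)$; read literally, the corollary would then assert $\ba=\tilde{\ba}$ unconditionally, which is absurd. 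The statement evidently carries a typo, and the hypothesis must be read as coincidence of the Neumann data, $\partial_{\nu}u_j=\partial_{\nu}\tilde{u}_j$ a.e.\ on $(0,T)\times\Gamma^+$ (these are the measurements of Theorem \ref{thm-stab}). A careful proof should flag this and work from the corrected hypothesis, rather than silently converting Dirichlet equality into Neumann equality.

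With that reading your argument does close, but one more line is needed, since the right-hand side of the stability estimate involves $\partial_{\nu}\partial_t^k(u_j-\tilde{u}_j)$ for $k=1,2$, not $\partial_{\nu}(u_j-\tilde{u}_j)$ itself. The regularity established in \S \ref{sec-ip}, namely $u_j,\tilde{u}_j\in{\rm C}^2([0,T];\HH_2)$, together with the continuity of the normal-derivative trace from $\HH_2$ into ${\rm L}^2(\Gamma)$, shows that $t\mapsto\partial_{\nu}u_j(t,\cdot)$ is twice continuously differentiable with $\partial_t^k\partial_{\nu}u_j=\partial_{\nu}\partial_t^k u_j$; differentiating the identity $\partial_{\nu}u_j=\partial_{\nu}\tilde{u}_j$ on $(0,T)\times\Gamma^+$ in time then annihilates both terms of the $j$-th summand, and $\|\tilde{\ba}-\ba\|_{{\rm L}^2(\Omega)^n}\leq 0$ yields $\ba=\tilde{\ba}$, as you conclude. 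In short: your specialization of the theorem is the intended one, but the justification of the key vanishing step is wrong as stated and masks a defect in the statement itself that ought to be made explicit.
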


These results suggest several comments.
\begin{enumerate}
\item  We emphasize that the stability result of Theorem \ref{thm-stab} only requires a finite number (directly proportional to the dimension of $\Omega$) of observations on an appropriate subboundary, for a finite span-time. This result actually involves less measurements than the other reconstruction methods for magnetic potential vectors (or magnetic fields) known so far.
\item Although the inverse problem examined in \cite{BP}, of determining the external potential $q$ appearing in the Schr\"odinger equation $-{\rm i} u' + \Delta u + q u =0$ in $Q_T$, where $\Delta u := \sum_{j=1}^n \partial_{x_j}^2 u$ is the Laplacian of $u$, from the single measurement $\left. u \right|_{\Sigma_T^+}$, might seem very similar to the one addressed in this paper, there is an additional major mathematical difficulty when dealing with the magnetic potential. This comes from the presence of first order spatial differential terms of the form $\ba . \nabla u$ in the effective Hamiltonian $H_{\ba}(t)$.
\item If $n=3$, the equations in \eqref{ia1} model (in a ``natural" system of units where the various physical constants are taken equal to 1) the evolution of the wave function $u$ of a charged particle subject to the action of the magnetic field ${\bf b}(t,x) := \chi(t)\ {\rm curl}\ \ba(x)$, starting from the state $u_0$. Notice that in classical physics, only ${\bf b}$ has a physical meaning, and $\ba$ is a mathematical tool only. Moreover, in light of \cite{HHHO}[Remark 2.1], one can always assume (at least for a sufficiently smooth $\Gamma$) that $\ba$ satisfies the Coulomb gauge condition $\nabla . \ba=0$. As already mentioned in \S \ref{sec-wwaaf}, there is an obstruction to the uniqueness of $\ba$, since the magnetic potential appearing in the Schr\"odinger equation can at best be recovered modulo a gauge transform. The Coulomb gauge condition imposed on $\ba$ actually eliminates this indetermination.
\item In the particular case where $\chi(t)=\sin ( \omega t )$ for some $\omega>0$, \eqref{ia1} describes the evolution of the wave function $u$ of a (periodically) laser pulsed charged particle (see e.g. \cite{DL5}[Chap XVIII,\S 1.4.2.2]).
\item The Lipschitz stability inequality stated in Theorem \ref{thm-stab} is not only interesting from the mathematical point of view, but may also be a very useful tool in view of numerical simulations (see \cite{Liyamazou09} where this point is discussed).
\end{enumerate}

\subsection{Contents}
The paper is organized as follows. In section \ref{sec-tdmh} we study the solutions to non autonomous magnetic Schr\"odinger equations. Existence, uniqueness and regularity results are stated in \S \ref{ssec-dual}, and we establish in \S \ref{sec-cce} that the charge and the energy of these systems remain uniformly bounded in the course of time. The main properties of the associated magnetic Hamiltonians, needed in the proofs, are collected in \S \ref{sec-mso} and \S \ref{sec-tdh}. Section \ref{sec-si} is devoted to establishing the stability inequality stated in Theorem \ref{thm-stab}.
The inverse problem of determining the magnetic potential vector from partial lateral Neumann data is discussed in \S \ref{sec-ip}. The strategy used essentially relies on a global Carleman estimate for magnetic Schr\"odinger equations, given in \S \ref{sec-ce}. The proof of the global stability inequality is given in \S \ref{sec-li}.

\section{Non autonomous magnetic Schr\"odinger equations}
\label{sec-tdmh}
\setcounter{equation}{0}
The system under study is modeled by non autonomous magnetic Schr\"odinger equations which do not fit the usual requirements (see e.g. \cite{LM}[Chap. 5, Assumption (12.1)] imposing time independent coefficients to (non zero) order space derivatives) of the classical existence, uniqueness or regularity results in ${\rm L}^2(Q_T^+)$, found in the mathematical P.D.E. literature.
Furthermore, the strategy used in Section \ref{sec-si} requires that the system \eqref{ia1} be differentiated twice w.r.t. $t$, and, roughly speaking, then solved in a wider space of the form ${\rm L}^2(0,T;\HH')$, where $\HH'$ denotes the dual space of ${\rm H}_0^1(\Omega)$ or ${\rm H}_0^1(\Omega) \cap {\rm H}^2(\Omega)$. This statement is made precise in Lemma \ref{lm-r2}.
The corresponding existence, uniqueness and regularity results on the solutions to these problems are collected in Lemma \ref{lm-r1}, while the question of their equivalence in the above various functional spaces is treated by Lemma \ref{lm-unique}. Finally, some {\it a priori} estimates used in the derivation of the stability equality of Theorem \ref{thm-stab}, are derived in Lemma \ref{lm-ec} and Proposition \ref{pr-ec}.

\subsection{Notations}
In this subsection we introduce some basic notations used throughout the article.
Let $X_1$, $X_2$ be two separable Hilbert spaces. We denote by $\mathcal{B}(X_1,X_2)$ the class of linear bounded operators $T:X_1 \rightarrow X_2$.
Let $(a,b)$ be an open set of $\mathbb{R}$.
If the injection $X_1 \hookrightarrow X_2$ is continuous and $X_1$ is dense in $X_2$, we define
$W(a,b;X_1,X_2) := \{u;\ u \in {\rm L}^2(a,b;X_1),\ u' \in {\rm L}^2(a,b;X_2) \}$,
which, endowed with the norm $\| u \|_W:=(\| u \|_{{\rm L}^2(a,b;X_1)}^2 + \| u' \|_{{\rm L}^2(a,b;X_2)}^2)^{1 \slash 2}$, is a Hilbert space. More generally we put $W^m(a,b;X_1,X_2) := \{u;\ u \in {\rm H}^m(a,b;X_1),\ u'  \in {\rm H}^m(a,b;X_2) \}$ for every $m \in \N^*$, where ${\rm H}^m(a,b;X_1)$ denotes the usual $m^{{\rm th}}$-order Sobolev space of $X_1$-valued functions. For the sake of convenience we set ${\rm H}^0(a,b;X_1) := {\rm L}^2(a,b;X_1)$ in such a way that $W^0(a,b;X_1,X_2):=W(a,b;X_1,X_2)$. If $X_1=X_2=X$ we write $\mathcal{B}(X)$ instead of $\mathcal{B}(X,X)$, and $W^m(a,b;X)$ instead of $W^m(a,b;X,X)$, $m \in \N$.\\


\subsection{Magnetic Schr\"odinger operators}
\label{sec-mso}
Let $\ba \in {\rm C}^0([0,T];{\rm H}_{{\rm div}0}(\Omega;\R))$.
We consider the linear self-adjoint operator $H(t)$, $t \in [0,T]$ in $\HH_0:={\rm L}^2(\Omega)$, associated with the closed, densely defined and positive sesquilinear form
\bel{r1}
h(t;u,v) := \langle ({\rm i} \nabla + \ba(t,x)) u, ({\rm i} \nabla + \ba(t,x)) v \rangle_0,
\ee
with a domain independent of $t$,
${\rm dom}\ h(t)={\rm dom}\ h(0) = {\rm H}_0^1(\Omega)$. Here $\langle . , . \rangle_0$ denotes the standard scalar product in $\HH_0^n$.
The space ${\rm dom}\ h(0)$ endowed with the scalar product $\langle u , v \rangle_1 := \langle (-\Delta+1)^{1 \slash 2} u, (-\Delta+1)^{1 \slash 2} v \rangle_0$ (or equivalently  $\langle u , v \rangle_1= \langle u , v \rangle_0 + \langle \nabla u , \nabla v \rangle_0$) is a Hilbert space denoted by $\HH_1$, and we have
\bel{r1a}
\langle H(t) u , v \rangle_0 = h(t;u,v),\ u \in {\rm dom}\ H(t),\ v \in \HH_1.
\ee
The boundary $\Gamma$ being ${\rm C}^2$, we actually know from \cite{Caz}[Chap. 2] that
\bel{r1b}
{\rm dom}\ H(t)={\rm dom}\ (-\Delta) = {\rm H}_0^1(\Omega) \cap {\rm H}^2(\Omega).
\ee
We call $\HH_{-1}$ the dual space of $\HH_1$, that is to say the
vector space of continuous conjugate linear forms on $\HH_1$. For any
$u \in \HH_0$, the functional $v \mapsto \langle u , v \rangle_0$ belongs
to $\HH_{-1}$ since
$|\langle u , v \rangle_0 | \leq \|u\|_0 \|v\|_0
\leq \|u\|_0 \| v \|_1$,
and we can also regard $\HH_0$ as a subspace of $\HH_{-1}$.
Hence $\HH_1 \subset \HH_0 \subset \HH_{-1}$
where the symbol $\subset$ means a topological embedding, and
$H(t)$, $t \in [0,T]$, can be extended into an operator mapping $\HH_1$ into
$\HH_{-1}$ since
$$
  | h(t;u,v) | \leq C_T^2 \|u\|_{1} \| v \|_1,\ u, v \in \HH_1,
$$
where
\bel{r1b2}
C_T := (1 + n \mathcal{A}_0^2)^{1 \slash 2},\ \mathcal{A}_0 := \| \ba \|_{{\rm C}^0([0,T];{\rm L}^{\infty}(\Omega)^n)} = \sup_{t \in [0,T]} \| \ba(t,.) \|_{{\rm L}^{\infty}(\Omega)^n}.
\ee
Let us denote by $\langle \cdot , \cdot \rangle_{-1,1}$ the dual pairing
between $\HH_{-1}$ and $\HH_{1}$. This pairing is linear in
the first and conjugate linear in the second argument. In other words, the
embedding $\HH_0 \subset \HH_{-1}$ means that
$\langle g , \psi  \rangle_{-1,1}=\langle g , \psi  \rangle_0$ for all
$g \in \HH_0$ and $\psi \in \HH_1$, and the mapping
$H(t):\HH_{1} \to \HH_{-1}$ is defined so that
$\langle H(t) u , v \rangle_{-1,1}=h(t;u,v)$ for all $u,v \in \HH_1$.

The space ${\rm dom}\ (-\Delta)$, endowed with the scalar product $\langle u , v \rangle_2 := \langle (-\Delta+1) u, (-\Delta+1) v \rangle_0$, is an Hilbert space denoted by $\HH_2$.
For all $u \in \HH_2$, we have
\bel{r1d}
H(t) u = ({\rm i} \nabla + \ba(t))^2 u = (-\Delta + 2 {\rm i} \Pre{\ba(t)} . \nabla + |\ba(t)|^2) u,
\ee
from \eqref{r1a}-\eqref{r1b} and the Coulomb gauge condition $\nabla . \ba = 0$, which entails
\bel{r1c}
\| H(t) u \|_0 \leq \| \Delta u \|_0 + 2 \mathcal{A}_0 \| \nabla u \|_0 + n \mathcal{A}_0^2 \| u \|_0 \leq 2 C_T^2 \| u \|_2.
\ee
We call $\HH_{-2}$ the dual space of $\HH_2$. Similarly, we have
$\HH_2 \subset \HH_1 \subset \HH_0 \subset \HH_{-1} \subset \HH_{-2}$,
and we deduce from \eqref{r1c} that $H(t)$ can be extended into an operator mapping $\HH_0$ into $\HH_{-2}$. Denoting
$\langle . , . \rangle_{-2,2}$ the dual pairing between $\HH_{-2}$ and $\HH_2$, the mapping $H(t) : \HH_0 \to \HH_{-2}$ is defined by
$$\langle H(t) u , v \rangle_{-2,2} := \langle u , H(t) v \rangle_0,\ u \in \HH_0,\ v \in \HH_2.$$

\begin{remark}
\label{rem-rvmp}
In light of \eqref{r1d} we may assume in the sequel without limiting the generality of the foregoing that the magnetic vector potential $\ba$ is real-valued.
\end{remark}


\subsection{Time-dependent magnetic Hamiltonians}
\label{sec-tdh}
In this section we examine the dependence of $H(t)$ w.r.t. $t$ and establish some of the properties of its derivatives which will play a crucial role in this paper.

For $j=1,2,3$ we assume that $\ba \in {\rm C}^j([0,T];{\rm H}_{{\rm div}0}(\Omega;\R))$ and
introduce the operator
$$B_j(t):= \ba^{(j)}(t) . ( {\rm i} \nabla + \ba(t) ),\ \ba^{(j)}(t):=\frac{{\rm d}^j \ba(t)}{{\rm d}t^j},$$
with domain $\HH_1$, acting and densely defined in $\HH_0$.
In light of the divergence free condition imposed on $\ba$, we have
\bel{r12}
\langle B_j(t) p, q \rangle_0 = \langle p, B_j(t) q \rangle_0,\ p,q \in \HH_1,
\ee
thus $B_j(t)$ is symmetric in $\HH_0$. Further, for all $u \in \HH_1$, we have
\bel{r12star}
\| B_j(t) u \|_0 \leq \mathcal{A}_j \| ( {\rm i} \nabla + \ba(t)) u \|_0 \leq \mathcal{A}_j C_T \| u \|_1,
\ee
where $C_T$ is the same as in \eqref{r1b2}, and
\bel{r12c}
\mathcal{A}_j:= \| \ba \|_{{\rm C}^j([0,T];{\rm L}^{\infty}(\Omega)^n)} =\sup_{t \in [0,T]} \| \ba^{(j)}(t) \|_{{\rm L}^{\infty}(\Omega)^n},\ j=1,2,3.
\ee
Hence $B_j(t)$, $j=1,2,3$, can be extended from $\HH_0$ into $\HH_{-1}$, by setting:
\bel{r12b}
\langle B_j(t) u , v \rangle_{-1,1} := \langle u , B_j(t) v \rangle_0,\ u \in \HH_0,\ v \in \HH_1.
\ee
For all $t \in [0,T]$, we then define the three following operators
\bel{r12a}
H^{(1)}(t):= 2 B_1(t),\
H^{(2)}(t):= 2 [B_2(t) +  \ba'(t)^2]\ {\rm and}\ H^{(3)}(t):= 2 [B_3(t) + 3 \ba'(t) . \ba''(t) ]
\ee
with domain $\HH_1$.
In light of \eqref{r12} the operators $H^{(j)}(t)$, $j=1,2,3$, are symmetric in $\HH_0$ for each $t \in [0,T]$, and, due to \eqref{r12star}, there exists moreover a constant $\ell_j>0$, depending only on $C_T$ and $\{ \mathcal{A}_k \}_{k=0}^j$, such that we have:
\bel{r14}
\| H^{(j)}(t) u \|_0 \leq \ell_j \| u \|_1,\ u \in \HH_1.
\ee
Hence $H^{(j)}(t)$ can be extended from $\HH_0$ into $\HH_{-1}$ as
$\langle H^{(j)}(t) u , v \rangle_{-1,1} = \langle u , H^{(j)}(t) v \rangle_0$ whenever $u \in \HH_0$ and $v \in \HH_1$. Moreover for $\ba$ in ${\rm C}^j([0,T];{\rm H}_{{\rm div}0}(\Omega;\R))$, $j=1,2,3$, the mapping $t \mapsto h(t;u,v)$, for $u,v \in \HH_1$ fixed, is $j$ times continuously differentiable on $[0,T]$, and it holds true that
\bel{r13}
\frac{{\rm d}^j}{{\rm d}t^j} h(t;u,v)  = h^{(j)}(t;u,v)  = \langle H^{(j)}(t) u, v \rangle_0 = \langle u, H^{(j)}(t) v \rangle_0,\ j=1,2,3.
\ee
When convenient, we shall write in the sequel $H'(t)$ for $H^{(1)}(t)$, $H''(t)$ instead of $H^{(2)}(t)$, and $H'''(t)$ for $H^{(3)}(t)$.

\subsection{Solution to non-autonomous Schr\"odinger equations}
\label{ssec-dual}
Throughout this section we assume that $\ba \in {\rm C}^1([0,T];{\rm H}_{{\rm div}0}(\Omega;\R))$.

\subsubsection{Solutions in ${\rm L}^2(0,T;\HH_{-j})$, $j=0,1,2$.}
Let $f$ be in ${\rm L}^2(0,T;\HH_{-j})$ for $j=0,1,2$. A solution to the Schr\"odinger equation
\bel{a1}
- {\rm i} \psi' + H(t) \psi = f\ {\rm in}\ {\rm L}^2(0,T;\HH_{-j}),
\ee
is a function $\psi \in {\rm L}^2(0,T;\HH_{2-j})$ satisfying for every $v \in \HH_j$:
\bel{a2}
- {\rm i} \frac{{\rm d}}{{\rm d} t} \langle \psi(t) , v \rangle_0 + \langle H(t) \psi(t) , v \rangle_{-j,j} = \langle f(t), v \rangle_{-j,j}\ {\rm in}\ {\rm C}_0^{\infty}(0,T)',
\ee
where $\langle . , . \rangle_{-j,j}$ stands for $\langle . , . \rangle_0$ in the particular case where $j=0$.

Let us now introduce the following notations that will be used in the remaining of this section.
For all $v \in C_0^{\infty}(\Omega)=C_0^{\infty}(\Omega;{\mathbb C})$ and $\varphi \in C_0^{\infty}(0,T)=C_0^{\infty}(0,T;{\mathbb C})$ we set
$$(v \otimes \varphi)(t,x) := v(x) \varphi(t),\ t \in (0,T),\ x \in \Omega, $$
and define the space ${\rm C}_0^{\infty}(\Omega) \otimes {\rm C}_0^{\infty}(0,T):=\{ v \otimes \varphi,\ v \in {\rm C}_0^{\infty}(\Omega),\ \varphi \in {\rm C}_0^{\infty}(0,T) \}$.
We recall that
\bel{w0}
{\rm C}_0^{\infty}(\Omega) \otimes {\rm C}_0^{\infty}(0,T)\ {\rm is\ dense\ in}\ {\rm C}_0^{\infty}(Q_T^+)={\rm C}_0^{\infty}(Q_T^+;{\mathbb C}),\ Q_T^+ :=(0,T) \times \Omega.
\ee
Let $\psi$ be solution to \eqref{a1}. For all $v \in C_0^{\infty}(\Omega)$ and $\varphi \in C_0^{\infty}(0,T;\R)$, it follows from \eqref{a2} that
\bel{a2b}
{\rm i}  \int_0^T \langle \psi(t), v \rangle_0 \varphi'(t) dt = \int_0^T \langle g(t), v \rangle_{-j,j} \varphi(t) dt, \ee
where $g(t):=f(t) -H(t) \psi(t)$ for a.e. $t \in (0,T)$. Evidently $g \in {\rm L}^2(0,T;\HH_{-j})$, and
\eqref{a2b} yields
\bel{a3}
{\rm i}  \int_0^T \langle \psi(t), (v \otimes \varphi)'(t) \rangle_0 dt = \int_0^T \langle g(t), (v  \otimes \varphi)(t) \rangle_{-j,j} dt,
\ee
for all $v \otimes \varphi \in {\rm C}_0^{\infty}(\Omega) \otimes {\rm C}_0^{\infty}(0,T)$.
Since $\psi \in {\rm L}^2(0,T;\HH_0)$, $g \in {\rm L}^2(0,T;\HH_{-j})$, and ${\rm C}_0^{\infty}(\Omega) \otimes {\rm C}_0^{\infty}(0,T) \subset {\rm C}_0^{\infty}(Q_T^+)$, \eqref{a3} can be rewritten as
$$
{\rm i}  \langle \psi, (v \otimes \varphi)' \rangle_{{\rm C}_0^{\infty}(Q_T^+)',{\rm C}_0^{\infty}(Q_T^+)}  = \langle g(t), v \otimes \varphi \rangle_{{\rm C}_0^{\infty}(Q_T^+)',{\rm C}_0^{\infty}(Q_T^+)},\ v \otimes \varphi \in {\rm C}_0^{\infty}(\Omega) \otimes {\rm C}_0^{\infty}(0,T).
$$
From this and \eqref{w0} then follows that $i \psi' = g$ in ${\rm C}_0^{\infty}(Q_T^+)'$. Taking into account that $g \in {\rm L}^2(0,T;\HH_{-j})$ we obtain that
$\psi' \in {\rm L}^2(0,T;\HH_{-j})$
and thus that \eqref{a1} holds true.
Henceforth any solution to \eqref{a1} belongs to
$W(0,T;\HH_{2-j},\HH_{-j})$ and
\eqref{a2} can be rewritten as
\bel{a5}
- {\rm i} \langle \psi'(t) , v \rangle_{-j,j} + \langle H(t) \psi(t) , v \rangle_{-j,j} = \langle f(t), v \rangle_{-j,j}\ {\rm in}\ {\rm C}_0^{\infty}(0,T)',\ v \in \HH_j.
\ee
As a consequence a solution to the Schr\"odinger equation \eqref{a1} is a function $\psi \in W(0,T;\HH_{2-j},\HH_{-j})$
satisfying \eqref{a5}.

\subsubsection{Existence and uniqueness results}
For  $j=0,1,2$, we consider the Cauchy problem
\bel{s0}
\left\{ \begin{array}{l} -{\rm i} \psi' + H(t) \psi = f\ {\rm in}\ {\rm L}^2(0,T;\HH_{-j})  \\ \psi(0) = \psi_0, \end{array} \right.
\ee
where $\psi_0 \in \HH_{2-j}$ and $f \in {\rm L}^2(0,T;\HH_{-j})$.

Notice that the second line in \eqref{s0} makes sense since $\psi(0)$ is well defined in $\HH_{1-j}$ for every $j=0,1,2$. Indeed we have $W(0,T;\HH_{2-j},\HH_{-j}) \hookrightarrow {\rm C}^0([0,T];\HH_{1-j})$, $j=0,1,2$, as can be seen from \cite{DL5}[XVIII,\S1,(1.61)(iii)] for $j=0$, and from \cite{DL5}[XVIII,\S 1,Theorem 1] for $j=1$, the case
$j=2$ being a direct consequence of the imbedding $W(0,T;\HH_2,\HH_0) \hookrightarrow {\rm C}^0([0,T];\HH_{1})$ and the fact that $(1-\Delta)^{-1}$ maps $W(0,T;\HH_0,\HH_{-2})$ one-to-one onto $W(0,T;\HH_2,\HH_0)$.

Further, with reference to \cite{Eskin3}[Lemma 2.1] for $j=0$, and to \cite{LM}[Chap. 3, Theorem 10.1 \& Remark 10.2] (see also \cite{DL5}[XVIII, \S 7, Theorem 1 \& Remark 3]) for $j=1$, we recall the following existence and uniqueness result:
\begin{lemma}
\label{lm-r1}
Let $T$, $n$ and $\Omega$ be as in \S \ref{sec-wwaaf}, let $\ba \in {\rm C}^1([0,T];{\rm H}_{{\rm div}0}(\Omega;\R))$, and fix $j=0,1$. Then for all $\psi_0 \in \HH_{2-j}$ and $f \in W(0,T;\HH_0,\HH_{-j})$, there exists a unique solution $\psi  \in {\rm C}^0([0,T];\HH_{2-j}) \cap {\rm C}^1([0,T];\HH_{-j})$ to \eqref{s0}.
\end{lemma}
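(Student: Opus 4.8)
The plan is to recast the Cauchy problem \eqref{s0} as an abstract linear evolution equation of Schr\"odinger type, governed by the time-dependent family of sesquilinear forms $h(t;\cdot,\cdot)$, and then to invoke the classical variational theory for such equations once all of its structural hypotheses have been checked against the properties collected in \S \ref{sec-mso} and \S \ref{sec-tdh}. I would treat the two admissible values of $j$ separately, starting with $j=1$, whose natural functional setting is the Gelfand triple $\HH_1 \subset \HH_0 \subset \HH_{-1}$.

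For $j=1$, I would verify four facts. First, $h(t;\cdot,\cdot)$ is bounded on $\HH_1$, uniformly in $t$, by the constant $C_T^2$ of \eqref{r1b2}. Second, it is Hermitian, equivalently $H(t)$ is self-adjoint (cf. \eqref{r1a}); this is precisely the structural feature that makes \eqref{s0} of conservative Schr\"odinger type, as required by the cited theorems. Third, a G\aa rding inequality $\mathrm{Re}\, h(t;u,u) + \lambda \| u \|_0^2 \geq \alpha \| u \|_1^2$ holds with constants $\alpha,\lambda$ uniform in $t$: expanding $h(t;u,u) = \| \nabla u \|_0^2 + 2\, \mathrm{Re}\, \langle {\rm i} \nabla u, \ba(t) u \rangle_0 + \| \ba(t) u \|_0^2$ and absorbing the cross term through Young's inequality, using the uniform bound $\mathcal{A}_0$ on $\ba$, yields $\alpha = 1/2$ and $\lambda = 2 \mathcal{A}_0^2 + 1/2$. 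Fourth, $t \mapsto h(t;u,v)$ is of class ${\rm C}^1$ with $h'(t;u,v) = \langle H'(t) u, v \rangle_0$, by \eqref{r13} and the assumption $\ba \in {\rm C}^1([0,T];{\rm H}_{{\rm div}0}(\Omega;\R))$. With these in hand, \cite{LM}[Chap. 3, Theorem 10.1 \& Remark 10.2] (or \cite{DL5}[XVIII, \S 7]) furnishes a unique $\psi \in W(0,T;\HH_1,\HH_{-1})$ solving \eqref{s0}; the data regularity $\psi_0 \in \HH_1$ and $f \in W(0,T;\HH_0,\HH_{-1})$ is exactly what the regularity part of those theorems needs in order to upgrade $\psi$ to ${\rm C}^0([0,T];\HH_1) \cap {\rm C}^1([0,T];\HH_{-1})$.

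For $j=0$ I would reduce to the previous case by differentiating the equation in $t$. Setting $\phi := \psi'$, one finds formally $-{\rm i} \phi' + H(t) \phi = f' - H'(t) \psi$ with initial value $\phi(0) = {\rm i}(f(0) - H(0) \psi_0)$, which lies in $\HH_0$ since $\psi_0 \in \HH_2$ gives $H(0)\psi_0 \in \HH_0$ (by \eqref{r1c}) and $f \in W(0,T;\HH_0) \hookrightarrow {\rm C}^0([0,T];\HH_0)$. The operator $H'(t) = 2 B_1(t)$ maps $\HH_1$ boundedly into $\HH_0$ by \eqref{r14}, so the source lies in ${\rm L}^2(0,T;\HH_0)$; solving this auxiliary problem in the triple $\HH_1 \subset \HH_0 \subset \HH_{-1}$ (this is essentially \cite{Eskin3}[Lemma 2.1]) gives $\psi' = \phi \in {\rm C}^0([0,T];\HH_0)$. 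Feeding this back into the equation as $H(t)\psi = f + {\rm i} \psi' \in {\rm C}^0([0,T];\HH_0)$ and invoking the identification ${\rm dom}\, H(t) = \HH_2$ together with the norm equivalence \eqref{r1c}, i.e. elliptic regularity, I recover $\psi \in {\rm C}^0([0,T];\HH_2)$, which is the asserted $j=0$ regularity.

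The main obstacle is the time dependence of the first-order magnetic coefficient $\ba(t) . \nabla$, which is exactly what disqualifies the strong ${\rm L}^2(Q_T^+)$ theory (whose hypotheses demand time-independent coefficients for the nonzero-order derivatives). The variational framework circumvents this because it only asks for smoothness of $t \mapsto h(t;u,v)$, but one must be careful to keep the continuity constant $C_T$ and the G\aa rding constant $\lambda$ uniform in $t$, which is guaranteed by the uniform bound $\mathcal{A}_0$. The second delicate point, in the $j=0$ reduction, is to justify rigorously that $\phi = \psi'$ indeed solves the differentiated problem with the stated source and initial datum: this calls for a difference-quotient (or Galerkin/regularization) argument rather than the formal differentiation sketched above, together with genuine control of the commutator term $H'(t)\psi$ in ${\rm L}^2(0,T;\HH_0)$.
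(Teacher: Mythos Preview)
The paper does not actually prove this lemma: it is stated as a known result, with a direct reference to \cite{Eskin3}[Lemma 2.1] for the case $j=0$ and to \cite{LM}[Chap.~3, Theorem 10.1 \& Remark 10.2] (equivalently \cite{DL5}[XVIII, \S 7]) for $j=1$. Your treatment of $j=1$ is therefore exactly in line with the paper --- you simply spell out the verification of the hypotheses (uniform boundedness, Hermitian symmetry, G\aa rding coercivity, and ${\rm C}^1$ dependence of $t \mapsto h(t;\cdot,\cdot)$) that the Lions--Magenes theorem requires.

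Where you diverge is for $j=0$: rather than quoting \cite{Eskin3} outright, you attempt to \emph{derive} the $\HH_2$--regularity by bootstrapping from the $j=1$ case via time-differentiation and elliptic regularity. This is a legitimate and instructive alternative route, and it anticipates the mechanism behind Lemma \ref{lm-r2}. The only caveat, which you flag yourself, is that the auxiliary problem for $\phi=\psi'$ has data $\phi(0)\in\HH_0$ and source merely in ${\rm L}^2(0,T;\HH_0)$, not in $W(0,T;\HH_0,\HH_{-1})$; so you cannot literally reapply the $j=1$ statement to it. One needs instead a basic ${\rm C}^0([0,T];\HH_0)$ well-posedness result at that step (an $\HH_0$ energy estimate plus density, essentially \eqref{ec0}) --- and your parenthetical ``this is essentially \cite{Eskin3}[Lemma 2.1]'' ends up invoking the very reference the paper cites directly. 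So the bootstrap is sound in spirit but not fully self-contained; the paper simply sidesteps this by citing \cite{Eskin3} for $j=0$ from the outset.
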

In light of Lemma \ref{lm-r1}, it is thus equivalent to solve \eqref{s0} with either $j=0$ or $j=1$ when $\psi_0 \in \HH_2$ and $f \in W(0,T;\HH_0)$. Moreover, due to the following uniqueness result, the same is true for either $j=1$ or $j=2$ provided $\psi_0 \in \HH_1$ and $f \in W(0,T;\HH_0,\HH_{-1})$:
\begin{lemma}
\label{lm-unique}
Let $T$, $n$ and $\Omega$ be the same as in \S \ref{sec-wwaaf}.
Then, for all $f \in {\rm L}^2(0,T;\HH_{-2})$ and $\psi_0 \in \HH_{-1}$, there exists at most one solution to the problem \eqref{s0} for $j=2$.
\end{lemma}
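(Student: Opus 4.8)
\emph{Proof proposal.} The plan is to establish uniqueness by a duality (transposition) argument against solutions of a suitably regular backward adjoint problem, since an energy estimate obtained by pairing the equation with $\psi$ itself is unavailable here ($\psi$ only lives in $\HH_0$, whereas $H(t)\psi \in \HH_{-2}$). By linearity it suffices to prove that any $\psi \in W(0,T;\HH_0,\HH_{-2})$ solving \eqref{s0} for $j=2$ with $f=0$ and $\psi_0=0$ vanishes identically; since $W(0,T;\HH_0,\HH_{-2}) \hookrightarrow {\rm C}^0([0,T];\HH_{-1})$, the datum $\psi(0)=0$ is understood in $\HH_{-1}$. To show $\psi=0$ in ${\rm L}^2(0,T;\HH_0)$ it is then enough to check that $\int_0^T \langle \psi(t),g(t) \rangle_0\, {\rm d}t = 0$ for every $g$ in the dense subspace $W(0,T;\HH_0) \subset {\rm L}^2(0,T;\HH_0)$.

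Fix such a $g$ and consider the backward Cauchy problem $-{\rm i}\phi' + H(t)\phi = g$ with $\phi(T)=0$. First I would solve it by reduction to Lemma \ref{lm-r1}: setting $\phi(t):=\overline{w(T-t)}$, a direct computation using that $\ba$ is real-valued, so that $\overline{H(t)}\,w = (-{\rm i}\nabla+\ba(t))^2 w$, shows that $w$ solves the \emph{forward} problem $-{\rm i}w' + \tilde H(s) w = \overline{g(T-s)}$, $w(0)=0$, where $\tilde H(s)$ is the magnetic Schr\"odinger operator associated with the potential $\tilde\ba(s):=-\ba(T-s)$. Since $\tilde\ba \in {\rm C}^1([0,T];{\rm H}_{{\rm div}0}(\Omega;\R))$, $w(0)=0 \in \HH_2$, and $\overline{g(T-\cdot)} \in W(0,T;\HH_0)$ (time reflection and complex conjugation preserving this space), Lemma \ref{lm-r1} with $j=0$ provides a unique $w \in {\rm C}^0([0,T];\HH_2) \cap {\rm C}^1([0,T];\HH_0)$. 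Consequently $\phi \in {\rm C}^0([0,T];\HH_2) \cap {\rm C}^1([0,T];\HH_0) \subset W(0,T;\HH_2,\HH_0)$ solves the backward adjoint problem with $\phi(T)=0$ and, crucially, $\phi(t) \in \HH_2$ for every $t$.

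Next I would pair the identity $-{\rm i}\psi' + H(t)\psi = 0$, read in $\HH_{-2}$ for a.e. $t$, against $\phi(t) \in \HH_2$, and integrate in time. Using the defining relation $\langle H(t)\psi,\phi\rangle_{-2,2} = \langle\psi,H(t)\phi\rangle_0$ of the extension $H(t):\HH_0\to\HH_{-2}$, and substituting $H(t)\phi = g + {\rm i}\phi'$ from the adjoint equation, this yields
\beq
-{\rm i}\int_0^T \left( \langle \psi'(t),\phi(t)\rangle_{-2,2} + \langle \psi(t),\phi'(t)\rangle_0 \right){\rm d}t + \int_0^T \langle \psi(t),g(t)\rangle_0\, {\rm d}t = 0 .
\eeq
The standard integration-by-parts formula for the Gelfand triple $\HH_2 \subset \HH_0 \subset \HH_{-2}$ (see \cite{LM} or \cite{DL5}), valid since $\psi \in W(0,T;\HH_0,\HH_{-2})$ and $\phi \in W(0,T;\HH_2,\HH_0)$, identifies the first integral with $\langle\psi(T),\phi(T)\rangle_{-1,1} - \langle\psi(0),\phi(0)\rangle_{-1,1}$, which vanishes because $\psi(0)=0$ and $\phi(T)=0$. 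Hence $\int_0^T \langle\psi(t),g(t)\rangle_0\,{\rm d}t = 0$, and letting $g$ range over the dense subspace $W(0,T;\HH_0)$ gives $\psi=0$.

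I expect the main obstacle to be twofold. The first point is producing an adjoint solution regular enough ($\phi(t)\in\HH_2$) for the pairing $\langle\psi,H(t)\phi\rangle_0$ to be licit; this is precisely what forces the conjugation/time-reversal reduction to Lemma \ref{lm-r1} rather than a direct argument. The second, more delicate point is justifying rigorously the duality integration by parts in this low-regularity setting, including giving meaning to the boundary terms in $\HH_{-1}$ via the embedding $W(0,T;\HH_0,\HH_{-2}) \hookrightarrow {\rm C}^0([0,T];\HH_{-1})$, and verifying that all the integrands above lie in ${\rm L}^1(0,T)$.
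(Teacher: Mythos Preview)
Your argument is correct and complete: the time-reversal/conjugation reduction to Lemma \ref{lm-r1} legitimately produces an adjoint solution $\phi \in {\rm C}^0([0,T];\HH_2)\cap{\rm C}^1([0,T];\HH_0)$, the pairing computation is sign-consistent with the paper's sesquilinear conventions, and the integration-by-parts formula for the pair $(\psi,\phi)\in W(0,T;\HH_0,\HH_{-2})\times W(0,T;\HH_2,\HH_0)$ is the standard one in the Gelfand triple $\HH_2\subset\HH_0\subset\HH_{-2}$.

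However, the paper proceeds quite differently. Instead of a transposition argument, it tests the equation directly against $v=R(\tau)\psi(\tau)$ with $R(\tau):=(1+H(\tau))^{-1}$, so that $v\in\HH_2$ is built from $\psi$ itself rather than from an auxiliary backward problem. One then shows that $\tau\mapsto\|R(\tau)^{1/2}\psi(\tau)\|_0^2$ is differentiable, computes its derivative (the imaginary contributions cancel, leaving only the commutator-type term $\langle R(\tau)\psi,H'(\tau)R(\tau)\psi\rangle_0$), bounds it via $\|H'(\tau)R(\tau)^{1/2}\|\le 2\mathcal A_1$, and concludes by Gronwall. This ``resolvent-regularized energy'' method is entirely self-contained: it does not invoke Lemma \ref{lm-r1}, and in particular does not require solving any auxiliary Cauchy problem. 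Your duality route, by contrast, is logically downstream of Lemma \ref{lm-r1} (via the adjoint solution), but has the virtue of being a clean, structurally robust transposition argument that would adapt verbatim to other non-autonomous evolutions once strong existence for the forward problem is known. Both approaches exploit the ${\rm C}^1$ regularity of $\ba$ in time: the paper through $H'(\tau)$, yours through the hypothesis of Lemma \ref{lm-r1} applied to $\tilde\ba(s)=-\ba(T-s)$.
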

\begin{proof}
It is enough to prove that the only solution $\psi$ to the problem \eqref{s0} with $f=0$ and $\psi_0=0$ is identically zero.
Actually for all $\tau \in (0,T)$, the first line of \eqref{s0} with $f=0$ states that we have
$\langle \psi'(\tau) , v \rangle_{-2,2} = - {\rm i} \langle \psi(\tau) , H(\tau) v \rangle_0$
for any $v \in \HH_2$. In the particular case where $v=R(\tau) \psi(\tau)$, with $R(\tau):=(1+H(\tau))^{-1}$, this yields
\bel{z7}
\langle \psi'(\tau) , R(\tau) \psi(\tau) \rangle_{-2,2} =  {\rm i} ( \| R(\tau)^{1 \slash 2} \psi(\tau) \|_0^2
- \| \psi(\tau) \|_0^2).
\ee
In light of \eqref{r13}, $\tau \mapsto R(\tau) \varphi$ is differentiable in $(0,T)$ for each $\varphi \in \HH_0$, and we get $\frac{{\rm d}}{{\rm d} \tau} R(\tau) \varphi = R(\tau) H'(\tau) R(\tau) \varphi$ by standard computations. Therefore
$\tau \mapsto \langle R(\tau) \psi(\tau) , \varphi \rangle_0 = \langle  \psi(\tau) ,  R(\tau) \varphi \rangle_0$ is differentiable in $(0,T)$ as well, and we find that
\bea
\frac{{\rm d}}{{\rm d} \tau} \langle R(\tau) \psi(\tau) , \varphi \rangle_0
& = & \langle \psi'(\tau) , R(\tau) \varphi \rangle_{-2,2} + \langle \psi(\tau) , R(\tau) H'(\tau) R(\tau) \varphi \rangle_0 \nonumber \\
& = & \langle \psi'(\tau) , R(\tau) \varphi \rangle_{-2,2} + \langle R(\tau) H'(\tau) R(\tau) \psi(\tau) ,  \varphi \rangle_0. \label{z7b}
\eea
Recalling that $H(\tau)$ is extended into an operator mapping $\HH_0$ into $\HH_{-2}$, defined by
$\langle H(\tau) u , v \rangle_{-2,2} = \langle u , H(\tau) v \rangle_0$ for all $u \in \HH_0$ and $v \in \HH_2$,
the resolvent $R(\tau)$ can therefore be extended into an operator mapping $\HH_{-2}$ into $\HH_0$, according to the identity
$$ \langle R(\tau) u , v \rangle_0 :=  \langle u ,  R(\tau) v \rangle_{-2,2},\ u \in \HH_{-2},\ v \in \HH_0. $$
This, together with \eqref{z7b} proves that $(0,T) \ni \tau \mapsto R(\tau) \psi(\tau)$ is differentiable in $\HH_0$, with
$\frac{{\rm d}}{{\rm d} \tau} R(\tau) \psi(\tau)
= R(\tau) \psi'(\tau) + R(\tau) H'(\tau) R(\tau) \psi(\tau)$.
As a consequence,
$\tau \mapsto \| R(\tau)^{1 \slash 2} \psi(\tau) \|_0^2= \langle \psi(\tau) , R(\tau) \psi(\tau) \rangle_0$
is differentiable on $(0,T)$, and we have
$$ \frac{{\rm d}}{{\rm d} \tau} \| R(\tau)^{1 \slash 2} \psi(\tau) \|_0^2 = \langle \psi(\tau) , R(\tau) H'(\tau) R(\tau) \psi(\tau) \rangle_0
+ 2 \Pre{\langle \psi'(\tau), R(\tau) \psi(\tau) \rangle_{-2,2}}. $$
The second term in the r.h.s. of the above identity being zero according to \eqref{z7}, we end up getting that
\bel{z8}
\frac{{\rm d}}{{\rm d} \tau} \| R(\tau)^{1 \slash 2} \psi(\tau) \|_0^2 =
\langle R(\tau) \psi(\tau) , H'(\tau) R(\tau) \psi(\tau) \rangle_0.
\ee
Bearing in mind that $H'(\tau)=2 B_1(\tau)$ and using \eqref{r12c} we get that
\bel{z9}
\| H'(\tau) R(\tau)^{1 \slash 2} \| \leq 2 \mathcal{A}_1,\ \tau \in (0,T),
\ee
since
$\| B_1(\tau) R(\tau)^{1 \slash 2} u \|_0 = \| \ba'(t) . ( {\rm i} \nabla + \ba(\tau) ) R(\tau)^{1 \slash 2} u \|_0 \leq \mathcal{A}_1 \| ( {\rm i} \nabla + \ba(\tau) ) R(\tau)^{1 \slash 2} u \|_0$
for all $u \in \HH_0$, and
$$\| ( {\rm i} \nabla + \ba(\tau) ) R(\tau)^{1 \slash 2} u \|_0^2
= \| H(\tau)^{1 \slash 2} R(\tau)^{1 \slash 2} u \|_0^2 \leq \| u \|_0^2.$$
Putting \eqref{z8} and \eqref{z9} together, we obtain that
$\frac{{\rm d}}{{\rm d} \tau} \| R(\tau)^{1 \slash 2} \psi(\tau) \|_0^2 \leq 2 \mathcal{A}_1
\| R(\tau)^{1 \slash 2} \psi(\tau) \|_0^2$ for every $\tau \in (0,T)$,
hence
$$  \| R(t)^{1 \slash 2} \psi(t) \|_0^2 \leq 2 \mathcal{A}_1 \int_0^t \| R(\tau)^{1 \slash 2} \psi(\tau) \|_0^2 d \tau,\ t \in [0,T], $$
by integrating w.r.t. $\tau$ over $[0,t]$ and using the fact that $\psi(0)=0$.
Therefore $R(t)^{1 \slash 2} \psi(t)=0$ for all $t \in (0,T)$ by Gronwall inequality,
which yields $\psi=0$ and proves the result.
\end{proof}

In light of section \S \ref{sec-tdh} it is possible to differentiate (at least in a formal way) the Cauchy problem \eqref{s0} w.r.t. $t$. This is made precise with the following:
\begin{lemma}
\label{lm-r2}
Let $T$, $n$, $\Omega$ and $\ba$ be as in Lemma \ref{lm-r1},
let $f \in W(0,T;\HH_0)$, $\psi_0 \in \HH_2$, and let $\psi$ denote the ${\rm C}^0([0,T];\HH_2) \cap {\rm C}^1([0,T];\HH_0)$-solution to \eqref{s0}. Then $\psi'' \in {\rm L}^2(0,T;\HH_{-2})$, and $\psi'$ is solution to the system:
\bel{s1}
\left\{ \begin{array}{l} -{\rm i} \psi'' + H(t) \psi' = f' - H'(t) \psi\ {\rm in}\ {\rm L}^2(0,T;\HH_{-2}) \\ \psi'(0) = -{\rm i} ( H(0) \psi_0 - f(0) ). \end{array} \right.
\ee
\end{lemma}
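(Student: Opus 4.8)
The plan is to obtain \eqref{s1} by differentiating the first line of \eqref{s0} (in the case $j=0$) with respect to $t$, the whole difficulty being to make this differentiation rigorous in the weak sense dictated by the definition \eqref{a2}--\eqref{a5} of a solution in ${\rm L}^2(0,T;\HH_{-2})$. First I would record that, since $\psi$ is the ${\rm C}^0([0,T];\HH_2) \cap {\rm C}^1([0,T];\HH_0)$-solution furnished by Lemma \ref{lm-r1}, both $\psi'$ and $t \mapsto H(t)\psi(t)$ are continuous $\HH_0$-valued functions, so that the first line of \eqref{s0} holds pointwise in ${\rm C}^0([0,T];\HH_0)$. Evaluating this identity at $t=0$ and solving for $\psi'(0)$ immediately yields the second line of \eqref{s1}, namely $\psi'(0) = -{\rm i}(H(0)\psi_0 - f(0))$ in $\HH_0$; this is meaningful since $H(0)\psi_0 \in \HH_0$ (as $\psi_0 \in \HH_2$) and $f(0) \in \HH_0$ (as $f \in W(0,T;\HH_0) \hookrightarrow {\rm C}^0([0,T];\HH_0)$).

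For the equation itself, I would fix $v \in \HH_2$ and rewrite the pointwise form of \eqref{s0} as $\langle \psi'(t), v \rangle_0 = {\rm i} \langle f(t), v \rangle_0 - {\rm i} \langle \psi(t), H(t) v \rangle_0$, using the self-adjointness $\langle H(t)\psi(t),v\rangle_0 = \langle \psi(t),H(t)v\rangle_0$. The point is then to differentiate the right-hand side against an arbitrary $\varphi \in {\rm C}_0^{\infty}(0,T)$. The map $t \mapsto \langle f(t), v \rangle_0$ lies in ${\rm H}^1(0,T)$ with weak derivative $\langle f'(t), v \rangle_0$, so that term is handled by an integration by parts. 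For the second term I would first establish that $t \mapsto H(t) v$ is of class ${\rm C}^1([0,T];\HH_0)$ with derivative $H'(t) v$: this follows from the explicit expression \eqref{r1d} of $H(t)$ on $\HH_2$, the hypothesis $\ba \in {\rm C}^1([0,T];{\rm H}_{{\rm div}0}(\Omega;\R))$, and the identification \eqref{r12a}--\eqref{r13} of the derivative. Since $\psi \in {\rm C}^1([0,T];\HH_0)$ as well, the pairing $t \mapsto \langle \psi(t), H(t) v \rangle_0$ is then ${\rm C}^1$ with derivative $\langle \psi'(t), H(t) v \rangle_0 + \langle \psi(t), H'(t) v \rangle_0$, by the product rule for the continuous sesquilinear form $\langle \cdot , \cdot \rangle_0$.

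Carrying out the two integrations by parts, multiplying through by ${\rm i}$, and using the symmetry $\langle \psi(t), H'(t) v \rangle_0 = \langle H'(t) \psi(t), v \rangle_0$ from \eqref{r13} (the boundary terms vanishing since $\varphi$ is compactly supported), I would arrive at
\begin{equation*}
{\rm i} \int_0^T \langle \psi'(t), v \rangle_0 \varphi'(t)\, dt + \int_0^T \langle \psi'(t), H(t) v \rangle_0 \varphi(t)\, dt = \int_0^T \langle f'(t) - H'(t) \psi(t), v \rangle_0 \varphi(t)\, dt .
\end{equation*}
Recalling the definitions $\langle H(t) \psi'(t), v \rangle_{-2,2} = \langle \psi'(t), H(t) v \rangle_0$ and $\langle f'(t) - H'(t) \psi(t), v \rangle_{-2,2} = \langle f'(t) - H'(t) \psi(t), v \rangle_0$ (the latter legitimate since $f'(t), H'(t)\psi(t) \in \HH_0$ by \eqref{r14}), this is precisely the weak formulation \eqref{a5} of the first line of \eqref{s1}, with $j=2$, $\psi$ replaced by $\psi'$, and source term $f' - H'(t)\psi$.

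Finally, the source term satisfies $f' - H'(t)\psi \in {\rm L}^2(0,T;\HH_0) \subset {\rm L}^2(0,T;\HH_{-2})$, since $f' \in {\rm L}^2(0,T;\HH_0)$ and $t\mapsto H'(t)\psi(t) \in {\rm L}^2(0,T;\HH_0)$ by \eqref{r14} together with $\psi \in {\rm C}^0([0,T];\HH_1)$, while $\psi' \in {\rm C}^0([0,T];\HH_0) \subset {\rm L}^2(0,T;\HH_0)$. Thus $\psi'$ is a solution in ${\rm L}^2(0,T;\HH_{-2})$ of \eqref{s1} in the sense made precise at the start of \S \ref{ssec-dual}, and the general regularization argument given there (showing that every such solution belongs to $W(0,T;\HH_0,\HH_{-2})$) yields $\psi'' = (\psi')' \in {\rm L}^2(0,T;\HH_{-2})$, completing the proof. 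I expect the main obstacle to be the middle step: justifying the term-by-term differentiation of $t \mapsto \langle \psi(t), H(t) v \rangle_0$, which relies simultaneously on the ${\rm C}^1$-in-time regularity of $H(t)v$ coming from the time dependence of $\ba$ and on the ${\rm C}^1([0,T];\HH_0)$-regularity of $\psi$ supplied by Lemma \ref{lm-r1}; the remaining manipulations are routine.
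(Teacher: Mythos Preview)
Your proposal is correct and follows essentially the same route as the paper: both arguments move $H(t)$ onto the test vector $v\in\HH_2$ via self-adjointness, differentiate the pairing $t\mapsto\langle\psi(t),H(t)v\rangle_0$ by the product rule (using $\psi\in{\rm C}^1([0,T];\HH_0)$ and the ${\rm C}^1$-in-time dependence of $H(t)v$), and then read off the weak formulation of \eqref{s1}. Your write-up is simply more explicit about the initial condition, the distributional framing against $\varphi\in{\rm C}_0^\infty(0,T)$, and the appeal to the general argument of \S\ref{ssec-dual} to extract $\psi''\in{\rm L}^2(0,T;\HH_{-2})$, all of which the paper leaves implicit.
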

\begin{proof}
The existence and uniqueness of $\psi \in {\rm C}^0([0,T];\HH_2) \cap {\rm C}^1([0,T];\HH_0)$ being guaranteed by Lemma \ref{lm-r1}, $j=0$, the mapping $t \mapsto \langle H(t) \psi(t) , v \rangle_0 = \langle \psi(t) , H(t) v \rangle_0$ is thus continuously differentiable in $[0,T]$ for every $v \in \HH_2$, and it holds true that:
$$ \frac{{\rm d}}{{\rm d} t} \langle H(t) \psi(t) , v \rangle_0  = \langle \psi'(t) , H(t) v \rangle_0 + \langle \psi(t) , H'(t) v \rangle_0
= \langle H(t) \psi'(t)  +  H'(t) \psi(t) , v \rangle_{-2,2}.
$$
Further we have $t \mapsto \langle f, v \rangle_0 \in W(0,T;\C)$ by assumption, hence the same is true for $t \mapsto \langle \psi'(t) , v \rangle_0$ from \eqref{s0}, and the result follows.
\end{proof}

\subsubsection{Uniform time boundedness of charge and energy revisited}
\label{sec-cce}
We establish in Lemma \ref{lm-ec} that the charge and the energy of the system driven by \eqref{s0} remain uniformly bounded in the course of time over $[0,T]$. These estimates, which are reminiscent of \cite{DL5}[Chap. XVIII, Formula (7.17)] or \cite{BP}[Lemma 7], are obtained here in the more general ${\rm L}^2(0,T;\HH_{-1})$-framework for \eqref{s0}. They are further extended to the cases of the first and second time derivatives of the above quantities in Proposition \ref{pr-ec}. This result is one of the main ingredients in the derivation of
the Carleman estimate of Proposition \ref{pr-Carly}.

\begin{lemma}
\label{lm-ec}
Let $T$, $n$, $\Omega$ and $\ba$ be as in Lemma \ref{lm-r1}.
Then, there is a constant $c_0>0$, depending only on $T$, $\mathcal{A}_0$, $\mathcal{A}_1$ and $\Omega$, such that for all $\psi_0 \in \HH_1$ and all $f \in W(0,T;\HH_0,\HH_{-1})$, the ${\rm C}^0([0,T];\HH_1) \cap {\rm C}^1([0,T];\HH_{-1})$-solution $\psi$ to \eqref{s0}, satisfies simultaneously:
\bel{ec0}
\| \psi(t) \|_0 \leq {\rm e}^{T \slash 2} ( \| \psi_0 \|_0 + \| f \|_{{\rm L}^2(0,T;\HH_0)} ),\ t \in [0,T],
\ee
and
\bel{ec1}
\| \psi(t) \|_1 \leq c_0 ( \| \psi_0 \|_1 + \| f \|_{W(0,T;\HH_0,\HH_{-1})}),\ t \in [0,T].
\ee
\end{lemma}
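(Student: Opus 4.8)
The plan is to prove the two bounds by the classical energy method, handling the charge estimate \eqref{ec0} and the energy estimate \eqref{ec1} separately. For \eqref{ec0}, I would pair the first line of \eqref{s0} with $\psi(t)\in\HH_1$ in the $\langle\cdot,\cdot\rangle_{-1,1}$ duality, obtaining $-{\rm i}\langle\psi'(t),\psi(t)\rangle_{-1,1}+h(t;\psi(t),\psi(t))=\langle f(t),\psi(t)\rangle_{-1,1}$. Since $h(t;\psi(t),\psi(t))=\|({\rm i}\nabla+\ba(t))\psi(t)\|_0^2$ is real, taking imaginary parts annihilates the form, and using $\Pre{\langle\psi'(t),\psi(t)\rangle_{-1,1}}=\frac12\frac{{\rm d}}{{\rm d}t}\|\psi(t)\|_0^2$ (legitimate because $\psi\in W(0,T;\HH_1,\HH_{-1})$) gives $\frac{{\rm d}}{{\rm d}t}\|\psi(t)\|_0^2=-2\Pim{\langle f(t),\psi(t)\rangle_0}\leq\|f(t)\|_0^2+\|\psi(t)\|_0^2$, where I have used that $f(t)\in\HH_0$ for a.e.\ $t$. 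Gronwall's lemma then yields \eqref{ec0}.

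For \eqref{ec1}, the starting point is the coercivity-modulo-lower-order bound: from $\|\nabla v\|_0\leq\|({\rm i}\nabla+\ba(t))v\|_0+\mathcal{A}_0\|v\|_0$ one gets $\|\psi(t)\|_1^2\leq(1+2\mathcal{A}_0^2)\|\psi(t)\|_0^2+2E(t)$ with $E(t):=h(t;\psi(t),\psi(t))$, so since $\|\psi(t)\|_0$ is already controlled by \eqref{ec0} it suffices to bound the energy $E(t)$. I would differentiate $E$ in time, using \eqref{r13} for the explicit $t$-dependence of $h$ and the equation in the form $H(t)\psi(t)=f(t)+{\rm i}\psi'(t)$ to rewrite the cross term; the purely imaginary contribution ${\rm i}\|\psi'\|_0^2$ drops after taking real parts, leaving $\frac{{\rm d}}{{\rm d}t}E(t)=\langle H'(t)\psi(t),\psi(t)\rangle_0+2\Pre{\langle f(t),\psi'(t)\rangle_0}$. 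Integrating in $t$ and then integrating the forcing term by parts in time — this is where the hypothesis $f\in W(0,T;\HH_0,\HH_{-1})$, i.e.\ $f'\in{\rm L}^2(0,T;\HH_{-1})$, enters — converts $\int_0^t\langle f,\psi'\rangle_0$ into the boundary terms $\langle f(t),\psi(t)\rangle-\langle f(0),\psi_0\rangle$ plus $-\int_0^t\langle f'(s),\psi(s)\rangle_{-1,1}\,{\rm d}s$, none of which involve $\psi'$. Bounding $\langle H'(t)\psi,\psi\rangle_0$ by $\ell_1\|\psi\|_1\|\psi\|_0$ through \eqref{r14}, estimating the boundary terms via $\|\psi\|_1^2\leq C(\|\psi\|_0^2+E)$ and Young's inequality so as to absorb a small multiple of $E(t)$ on the left, and controlling the residual $\int_0^t E(s)\,{\rm d}s$ by Gronwall's lemma, I expect to reach $E(t)\leq C(\|\psi_0\|_1^2+\|f\|_{W(0,T;\HH_0,\HH_{-1})}^2)$, hence \eqref{ec1}.

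The main obstacle is rigor rather than the formal computation: in the regularity class of the lemma the solution only satisfies $\psi'(t)\in\HH_{-1}$, so the term $\langle f(t),\psi'(t)\rangle_0$ and indeed the very differentiation of $E(t)$ are not literally defined (differentiating $h(t;\psi,\psi)$ would produce $h(t;\psi',\psi)$, which would require $\psi'\in\HH_1$). I would circumvent this by first proving \eqref{ec1} for smooth data $\psi_0\in\HH_2$ and $f\in W(0,T;\HH_0)$, for which Lemma \ref{lm-r1} with $j=0$ furnishes a solution in ${\rm C}^0([0,T];\HH_2)\cap{\rm C}^1([0,T];\HH_0)$; there $\psi'(t)\in\HH_0$ and every manipulation above is justified. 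One then passes to general data by density, approximating $\psi_0$ in $\HH_1$ and $f$ in $W(0,T;\HH_0,\HH_{-1})$ by such smooth data and applying the already-established a priori bounds \eqref{ec0}--\eqref{ec1} to the differences of the corresponding smooth solutions; this shows that those solutions form a Cauchy sequence in ${\rm C}^0([0,T];\HH_1)$ whose limit solves \eqref{s0} and coincides with $\psi$ by the uniqueness of Lemma \ref{lm-unique}. The inequalities, being preserved under this limit, then hold for the solution of the lemma.
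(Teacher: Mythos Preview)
Your proposal is correct and follows essentially the same route as the paper's proof: the charge bound via the imaginary part and Gronwall, the energy identity by pairing with $\psi'$ and integrating the forcing term by parts in time, and a density argument from smooth data $\psi_0\in\HH_2$, $f\in W(0,T;\HH_0)$ to justify the formal computation. The only minor difference is in the passage to the limit: the paper establishes the bound for the approximants $\psi_n$, uses \eqref{ec0} on $\psi-\psi_n$ to get strong $\HH_0$-convergence, and then invokes weak lower semicontinuity of the $\HH_1$-norm, whereas you exploit linearity to apply the a priori bound to $\psi_n-\psi_m$ and obtain a strong Cauchy sequence in ${\rm C}^0([0,T];\HH_1)$.
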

\begin{proof}
We have
$- {\rm i} \langle \psi'(t) , \psi(t) \rangle_0 + h(t;\psi(t),\psi(t)) = \langle f(t) , \psi(t) \rangle_0$ for all $t \in [0,T]$,
by multiplying \eqref{s0} by $\overline{\psi(t,x)}$ and integrating w.r.t. $x$ over $\Omega$. Selecting the
imaginary part in the obtained expression then yields
$\Pre{\langle \psi'(t) , \psi(t) \rangle_0} = - \Pim{\langle f(t), \psi(t) \rangle_0}$, or equivalently
$\frac{{\rm d}}{{\rm d} t} \| \psi(t) \|_0^2 = - 2 \Pim{\langle f(t), \psi(t) \rangle_0}$.
This entails
$$\| \psi(t) \|_0^2 \leq \| \psi_0 \|_0^2 + \| f \|_{{\rm L}^2(0,T;\HH_0)}^2 + \int_0^t \| \psi(s) \|_0^2 {\rm d} s,\ t \in [0,T],$$
and \eqref{ec0} follows immediately from this and Gronwall inequality.

We turn now to proving \eqref{ec1}. The set $W(0,T;\HH_0)$ being dense in $W(0,T;\HH_0,\HH_{-1})$, we consider a sequence $(f_n)_n$ of
$W(0,T;\HH_0)$ satisfying $\| f - f_n \|_{W(0,T;\HH_0,\HH_{-1})} \rightarrow 0$ as $n$ goes to infinity. Similarly we pick a sequence $(\psi_{n,0})_n$ of $\HH_2$ such that $\lim_{n \rightarrow \infty} \| \psi_{n,0} - \psi_0 \|_1 = 0$, and, for each $n \in \N$, we call $\psi_n$ the ${\rm C}^0([0,T];\HH_2) \cap {\rm C}^1([0,T];\HH_0)$-solution to \eqref{s0}, where $(f,\psi_0)$ is replaced by $(f_n,\psi_{n,0})$:
\bel{s0n}
\left\{ \begin{array}{l} -{\rm i} \psi_n' + H(t) \psi_n = f_n\ {\rm in}\ {\rm L}^2(0,T;\HH_{0})  \\ \psi_n(0) = \psi_{n,0}. \end{array} \right.
\ee
From \eqref{s0}, \eqref{ec0} and \eqref{s0n} then follows for every $n \in \N$ that
\bel{ec1b}
\| \psi(t) - \psi_n(t) \|_0 \leq {\rm e}^{T \slash 2} ( \| \psi_0 - \psi_{n,0} \|_0 + \| f - f_n \|_{{\rm L}^2(0,T;\HH_0)}),\ t \in [0,T].
\ee
Further, multiplying \eqref{s0n} by $\overline{\psi_n'(t,x)}$, integrating w.r.t. $x$ over $\Omega$, and selecting the real part of the deduced result, entails
$\Pre{\langle H(t) \psi_n(t) , \psi_n'(t) \rangle_0} = \Pre{\langle f_n(t) , \psi_n'(t) \rangle_0}$ for all
$t \in [0,T]$. Hence
$\frac{{\rm d}}{{\rm d} t} h(t;\psi_n(t),\psi_n(t)) = h'(t;\psi_n(t),\psi_n(t)) + 2 \Pre{\langle f_n(t) , \psi_n'(t) \rangle_0}$, which yields
\bea
& & h(t;\psi_n(t),\psi_n(t)) - h(0;\psi_{n,0},\psi_{n,0}) \nonumber \\
& = & \int_0^t h'(s;\psi_n(s),\psi_n(s)) {\rm d} s +
2 \Pre{\int_0^t \langle f_n(s) , \psi_n'(s) \rangle_0} {\rm d} s,\ t \in [0,T]. \label{ec2}
\eea
Moreover, as $\int_0^t \langle f_n(s) , \psi_n'(s) \rangle_0 {\rm d} s = \langle f_n(t) , \psi_n(t) \rangle_0 - \langle f_n(0) , \psi_{n,0} \rangle_0 - \int_0^t \langle f_n'(s) , \psi_n(s) \rangle_0 {\rm d} s$, we have
\bea
\left| \int_0^t \langle f_n(s) , \psi_n'(s) \rangle_0 {\rm d} s \right|
&\leq &\| f_n(t) \|_{-1} \| \psi_n(t) \|_1 +
\| f_n(0) \|_{-1} \| \psi_{n,0} \|_1 \nonumber \\
&&+ \int_0^t \| f_n'(s) \|_{-1} \| \psi_n(s) \|_1 {\rm d} s \nonumber \\
& \leq & c_1 (1+\epsilon^{-1}) \| f_n \|_{W(0,T;\HH_0,\HH_{-1})}^2 + \| \psi_{n,0} \|_1^2 \nonumber \\
&&+ \epsilon \| \psi_n(t) \|_1^2 +  \int_0^t \| \psi_n(s) \|_1^2 {\rm d} s, \label{ec2a}
\eea
for every $\epsilon \in (0,1)$, the constant $c_1>0$ depending only on $\Omega$.
Here we used the fact that $W(0,T;\HH_0,\HH_{-1}) \hookrightarrow {\rm C}^0([0,T];\HH_{-1})$ (see \cite{DL5}[Chap. XVIII \S 1, Formula (1.61)(iii)]).
Further, putting \eqref{ec0} and \eqref{ec2}-\eqref{ec2a} together, and taking into account that $h$ is $\HH_1$-coercive w.r.t. $\HH_0$, we may choose $\epsilon>0$ small enough so that we have
$$ \| \psi_n(t) \|_1^2 \leq c_2 \left( \| \psi_{n,0} \|_1^2 + \| f_n \|_{W(0,T;\HH_0,\HH_{-1})}^2  + \int_0^t \| \psi_n(s) \|_1^2 {\rm d} s \right),\ t \in [0,T], $$
for some constant $c_2>0$ depending only on $T$, $\mathcal{A}_0$, $\mathcal{A}_1$ and $\Omega$. Therefore we end up getting
\bel{ec3}
\| \psi_n(t) \|_1 \leq c_0 ( \| \psi_{n,0} \|_1^2 + \| f_n \|_{W(0,T;\HH_0,\HH_{-1})}^2 ),\ t \in [0,T],
\ee
from the Gronwall inequality, where $c_0$ fulfills the conditions of \eqref{ec1}.

From \eqref{s0n} and \eqref{ec3} then follows that $(\psi_n)_n$ is a bounded sequence in ${\rm L}^{\infty}(0,T;\HH_1)$, and $(\psi_n')_n$ is bounded in ${\rm L}^{\infty}(0,T;\HH_{-1})$. Moreover, for every $t \in [0,T]$,  $(\psi_n(t))_n$ converges strongly to $\psi(t)$ in $\HH_0$ by \eqref{ec1b}, hence weakly in $\HH_1$ according to \cite{Caz}[Proposition 1.3.14(i)]. Therefore we have $\| \psi(t) \|_1 \leq \liminf_{n \rightarrow \infty} \| \psi_n(t) \|_1$ from the weak lower semicontinuity of the ${\rm H}^1$-norm (see e.g. \cite{Caz}[Remark 1.3.1(iii)]), so \eqref{ec1} follows from this, \eqref{ec3}, and the asymptotic behavior of $(f_n)_n$ and $(\psi_{n,0})_n$.
\end{proof}

We now prove the main result of this section:
\begin{proposition}
\label{pr-ec}
Let $T$, $n$ and $\Omega$ be as in \S \ref{sec-wwaaf}, and let $\ba \in {\rm C}^2([0,T];{\rm H}_{{\rm div}0}(\Omega;\R)$.
Assume that $f \in W^2(0,T;\HH_0,\HH_{-1})$, $\psi_0 \in \HH_2$, $f(0)-H(0) \psi_0 \in \HH_2$ and $f'(0)-H'(0) \psi_0 + {\rm i} (H(0) f(0) - H(0)^2 \psi_0) \in \HH_1$. Then there is a constant $c>0$ depending only on $T$, $\mathcal{A}_0$, $\mathcal{A}_1$, $\mathcal{A}_2$ and $\Omega$, such that the solution $\psi$ to \eqref{s0} satisfies:
\bel{ec}
\left\| (\partial^j \psi \slash \partial t^j)(t)  \right\|_1 \leq c \sum_{k=0}^j \left[ \| \Delta^k \psi_0 \|_1 + \left\| \partial^k f \slash \partial t^k \right\|_{W(0,T;\HH_0,\HH_{-1})} \right],\ j=0,1,2,\ t \in [0,T].
\ee
\end{proposition}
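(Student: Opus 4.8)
The plan is to prove \eqref{ec} by induction on $j\in\{0,1,2\}$, each step reducing to an application of Lemma \ref{lm-ec} to a Schr\"odinger system satisfied by the corresponding time derivative of $\psi$. The case $j=0$ is precisely \eqref{ec1}. For the higher orders, write $\varphi_j:=\partial_t^j\psi$ and differentiate the first line of \eqref{s0} $j$ times in $t$; applying the Leibniz rule to $H(t)\psi$ and using \eqref{r13}, one finds that $\varphi_j$ solves $-{\rm i}\varphi_j'+H(t)\varphi_j=F_j$ with $\varphi_j(0)=\Phi_{0,j}$, where $F_j:=\partial_t^jf-\sum_{k=1}^j\binom{j}{k}H^{(k)}(t)\varphi_{j-k}$, and $\Phi_{0,j}$ is obtained by solving the successively differentiated equations at $t=0$, so that $\Phi_{0,1}={\rm i}(f(0)-H(0)\psi_0)$ and $\Phi_{0,2}={\rm i}(f'(0)-H(0)\varphi_1(0)-H'(0)\psi_0)$. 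The two hypotheses on the data are exactly what is needed to guarantee $\Phi_{0,1}\in\HH_2$ and $\Phi_{0,2}\in\HH_1$, i.e. that these initial values lie in the space required to apply Lemma \ref{lm-ec} to the $\varphi_1$- and $\varphi_2$-systems. The key structural feature is that the source $F_j$ involves only the derivatives $\varphi_0,\dots,\varphi_{j-1}$ of order strictly below $j$.

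Since the $\HH_2$-regularity of $\psi$ provided by Lemma \ref{lm-r1} does not by itself make the second differentiation licit (the source $f'-H'(t)\psi$ of the system \eqref{s1} lies in $W(0,T;\HH_0,\HH_{-1})$ but generally not in $W(0,T;\HH_0)$, so Lemma \ref{lm-r2} cannot merely be iterated), I would first establish \eqref{ec} for smooth data and then pass to the limit, just as in the proof of Lemma \ref{lm-ec}. Thus I take sequences $\psi_{n,0}$ and $f_n$, smooth in the $\HH_m$-scale, with $\psi_{n,0}\to\psi_0$ in $\HH_2$, $f_n\to f$ in $W^2(0,T;\HH_0,\HH_{-1})$, and chosen so that the quantities $\Delta^k\psi_{n,0}$ and the traces $\partial_t^kf_n(0)$ entering the right-hand side of \eqref{ec} converge accordingly. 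For such data the solution $\psi_n$ is smooth enough in $t$ that all differentiations above are classical and each system for $\partial_t^j\psi_n$ holds in $L^2(0,T;\HH_0)$.

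For the smooth data I would run an interleaved induction in which, at each order $j$, the $\HH_0$ bound is established before the $\HH_1$ bound. First, the mapping properties \eqref{r14} of $H^{(k)}(t)$ together with the inductive $\HH_1$-control of $\varphi_0,\dots,\varphi_{j-1}$ show that $F_j\in L^2(0,T;\HH_0)$, with $\HH_0$-norm controlled by $\|\partial_t^jf\|_{L^2(0,T;\HH_0)}$ and lower-order energies; the charge estimate \eqref{ec0} applied to the $\varphi_j$-system then yields the $\HH_0$ bound on $\varphi_j$. Second, running the energy computation of Lemma \ref{lm-ec} on the $\varphi_j$-system and integrating $2\,\Pre{\int_0^t\langle F_j,\varphi_j'\rangle_0}$ by parts produces, through the identity $(H^{(k)}\varphi_{j-k})'=H^{(k+1)}\varphi_{j-k}+H^{(k)}\varphi_{j-k+1}$, a single top-order contribution $\langle H^{(1)}(t)\varphi_j,\varphi_j\rangle_0$; this is estimated by $\mathcal{A}_1 C_T\|\varphi_j\|_0\|\varphi_j\|_1$ and split with Young's inequality, the $\HH_1$ part being absorbed and the $\HH_0$ part handled by the charge bound just obtained. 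All remaining terms are controlled by \eqref{r14} and the lower-order $\HH_1$ bounds, and a Gronwall argument closes the $\HH_1$ estimate on $\varphi_j$.

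Finally, the initial values are estimated by expanding $H(0)=-\Delta+2{\rm i}\,\ba(0)\cdot\nabla+|\ba(0)|^2$ and iterating: the leading part of $\Phi_{0,j}$ is a constant multiple of $\Delta^j\psi_0$, whence the terms $\|\Delta^k\psi_0\|_1$ in \eqref{ec}, while the lower-order contributions and the traces $\partial_t^kf(0)$ are absorbed into $\sum_{k\le j}\|\Delta^k\psi_0\|_1$ and $\sum_{k\le j}\|\partial_t^kf\|_{W(0,T;\HH_0,\HH_{-1})}$. Passing to the limit $n\to\infty$ as in Lemma \ref{lm-ec}, via weak convergence of $\varphi_{n,j}(t)$ in $\HH_1$ and weak lower semicontinuity of the $\HH_1$-norm, then gives \eqref{ec}. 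I expect the main obstacle to be precisely this feedback of the top-order derivative: the term $H^{(1)}(t)\varphi_j$ arising in $F_j'$ forces the $\HH_0$ bound on $\varphi_j$ to be secured before the $\HH_1$ bound can be closed, so the order of the two estimates is essential. A secondary point is that, at order $j=2$, differentiating the source brings in $H^{(3)}(t)$ acting on $\psi$; bounding it through \eqref{r14} uses one more derivative of $\ba$ than the others, so this step draws on $\mathcal{A}_3$ and the ${\rm C}^3$-regularity furnished by \S\ref{sec-tdh}.
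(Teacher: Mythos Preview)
Your approach is correct but takes a somewhat different route from the paper's. You resolve the difficulty of iterating Lemma~\ref{lm-r2} by passing to smooth approximating data and redoing the energy computation from scratch for each $\varphi_j$. The paper instead uses a regularity bootstrap: after obtaining the system \eqref{ec4} for $\psi'$ via Lemma~\ref{lm-r2}, it applies Lemma~\ref{lm-r1} (case $j=1$) to that very system, with source $f_1\in W(0,T;\HH_0,\HH_{-1})$ and initial value in $\HH_1$, yielding $\psi'\in{\rm C}^0([0,T];\HH_1)$; this upgrades $f_1'=f''-H'(t)\psi'-H''(t)\psi$ to $L^2(0,T;\HH_0)$, so that $f_1\in W(0,T;\HH_0)$ after all and Lemma~\ref{lm-r2} can legitimately be applied a second time to produce the system \eqref{ec8} for $\psi''$. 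The quantitative bounds are then read off by citing Lemma~\ref{lm-ec} as a black box---first \eqref{ec0} to get the $\HH_0$ bound on $\varphi_j$, then \eqref{ec1} once $F_j\in W(0,T;\HH_0,\HH_{-1})$ is secured---which is precisely your interleaved $\HH_0$/$\HH_1$ strategy, but packaged modularly rather than re-derived. Your approach is more self-contained; the paper's is shorter and avoids a second density argument. Your observation that the $j=2$ step brings in $H^{(3)}(t)$ and hence $\mathcal{A}_3$ is accurate and applies equally to the paper's argument (where $f_2'$ contains the term $H'''(t)\psi$), the stated hypothesis $\ba\in{\rm C}^2$ notwithstanding.
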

\begin{proof}
The case $j=0$ following directly from Lemma \ref{lm-ec} we first prove \eqref{ec} for $j=1$.
Taking into account that $\psi_0 \in \HH_2$ and $f \in W(0,T;\HH_0)$, we have $\psi \in {\rm C}^0([0,T];\HH_2) \cap {\rm C}^1([0,T];\HH_0)$ from Lemma \ref{lm-r1}, and $\psi'$ is solution to the system
\bel{ec4}
\left\{ \begin{array}{l} -{\rm i} \psi'' + H(t) \psi' = f_1 := f' -H'(t) \psi\ {\rm in}\ {\rm L}^2(0,T;\HH_{-2})  \\ \psi'(0) = {\rm i} (f(0) - H(0) \psi_0), \end{array} \right.
\ee
by Lemma \ref{lm-r2}. Further, $f_1 \in W(0,T;\HH_0,\HH_{-1})$ since $f' \in W(0,T;\HH_0,\HH_{-1})$, and we have
\bel{ec5}
\| f_1 \|_{{\rm L}^2(0,T;\HH_0)} \leq \| f' \|_{{\rm L}^2(0,T;\HH_0)} + \ell_1 T  \| \psi \|_{{\rm L}^{\infty}(0,T;\HH_1)} \leq c_3 ( \| \psi_0 \|_1 + \| f' \|_{{\rm L}^2(0,T;\HH_0)} ),
\ee
according to \eqref{r14} and \eqref{ec} with $j=0$, for some constant $c_3>0$ depending only on $T$, $\mathcal{A}_0$, $\mathcal{A}_1$ and $\Omega$. This combined with \eqref{ec0} entails
\bel{ec6}
\| \psi'(t) \|_0 \leq c_4 ( \| \psi_0 \|_1 + \| \Delta \psi_0 \|_0 ),\ t \in [0,T],
\ee
where $c_4>0$ depends on $T$, $\mathcal{A}_0$, $\mathcal{A}_1$ and $\Omega$ as well.
Moreover, bearing in mind that $f_1'(t)=f''(t)-H'(t) \psi'(t) - H''(t) \psi(t)$, we find that
$$
\| f_1' \|_{{\rm L}^2(0,T;\HH_{-1})} \leq \| f'' \|_{{\rm L}^2(0,T;\HH_{-1})} + \ell_1 T \| \psi' \|_{{\rm L}^{\infty}(0,T;\HH_0)} + \ell_2 T \| \psi \|_{{\rm L}^{\infty}(0,T;\HH_0)},
$$
which, together with \eqref{ec1} and \eqref{ec5}-\eqref{ec6}, proves \eqref{ec} for $j=1$.

Further, in light of \eqref{ec4}, $\Delta \psi_0 \in \HH_1$ and $f_1 \in W(0,T;\HH_0,\HH_{-1})$, we know from Lemma \ref{lm-r1} that $\psi \in {\rm C}^1([0,T];\HH_1)$. This shows that $f_1 \in W(0,T;\HH_0)$ and, $\Delta \psi_0$ being taken in $\HH_2$, Lemma \ref{lm-r2} guarantees that $\psi''$ is solution to the system
\bel{ec8}
\left\{ \begin{array}{l} -{\rm i} \psi''' + H(t) \psi'' = f_2 := f''-2H'(t) \psi' - H''(t) \psi\ {\rm in}\ {\rm L}^2(0,T;\HH_{-2})  \\ \psi''(0) = {\rm i} (f'(0) - H'(0) \psi_0 ) - H(0) f(0) + H(0)^2 \psi_0. \end{array} \right.
\ee
Finally applying Lemma \ref{lm-r1} to \eqref{ec4} once more we see that
$\psi \in {\rm C}^1([0,T];\HH_2) \cap {\rm C}^2([0,T];\HH_0)$. Therefore $f_2 \in W(0,T;\HH_0,\HH_{-1})$, and the case $j=2$ follows from \eqref{ec8}, by arguing in the same way as in the derivation of \eqref{ec} with $j=1$, from \eqref{ec5}.
\end{proof}

\section{Stability inequality}
\label{sec-si}
\setcounter{equation}{0}
Let $\Omega$, $\ba_0$ and $M$ be the same as in \S \ref{intro-mr}. Then we consider the time depend magnetic Hamiltonian $H_{\ba}(t)=H(t)$, $t \in [0,T]$, defined in \S \ref{sec-mso} and associated to a magnetic potential of the form
$\ba(t,x):=\chi(t) \ba(x)$, where
\bel{defchi}
\ba \in {\bf A}(\ba_0,M)\ {\rm and}\ \chi \in {\rm C}^3([0,T];\R)\ {\rm is\ such\ that}\ \chi(0) = 0\ {\rm and}\ \chi'(0) \neq 0.
\ee
Notice in this case that the operators $B_{j,\ba}(t)=B_j(t)$, $j=1,2,3$, defined in \S \ref{sec-tdh}, have the following expression: $B_{j,\ba}(t) = \chi^{(j)}(t) \ba . ( {\rm i} \nabla + \chi(t) \ba )$, where $\chi^{(j)}(t):=\frac{{\rm d}^j \chi}{{\rm d} t^j}(t)$.

\subsection{The inverse problem}
\label{sec-ip}
In this section we discuss the inverse problem of determining the magnetic potential $\ba \in {\bf A}(\ba_0,M)$ from the measurement on a part $\Gamma^+$ of the boundary $\Gamma$, of Neumann data of the solution $u$ (more precisely the first and second order time derivatives of the flux of $u$) to the problem \eqref{ia1},
where $u_0$ is a suitable real-valued function on $\Omega$, satisfying:
\bel{cuz}
\Delta^k u_0 \in \HH_2,\ k=0,1,2.
\ee
In light of \eqref{ia1} and Lemma \ref{lm-r1}, and since $u_0 \in \HH_2$, we have $u \in {\rm C}^0([0,T];\HH_2) \cap {\rm C}^1([0,T];\HH_0)$, and $u'$ is solution to the system
\bel{ia1b}
\left\{  \begin{array}{ll} -{\rm i} u''(t,x) + H_{\ba}(t) u'(t,x) = f_1(t,x) := -H_{\ba}'(t) u(t,x), & (t,x) \in Q_T^+ \\ u'(t,x) = 0, & (t,x) \in \Sigma_T^+ \\ u'(0,x) = {\rm i} \Delta u_0(x), & x \in \Omega, \end{array} \right.
\ee
according to Lemma \ref{lm-r2}. Further, using that $f_1 \in W(0,T;\HH_0,\HH_{-1})$ and $\Delta u_0 \in \HH_1$, it follows from \eqref{ia1b} and Lemma \ref{lm-r1} that $u' \in {\rm C}^1(0,T;\HH_1) \cap {\rm C}^0([0,T];\HH_{-1})$, which, in turn, proves that $f_1 \in W(0,T;\HH_0)$. Thus, taking into account that $\Delta u_0 \in \HH_2$, we obtain that $u \in {\rm C}^1([0,T];\HH_2) \cap {\rm C}^2([0,T];\HH_0)$ from \eqref{ia1b} and Lemma \ref{lm-r1}. Moreover, by Lemma \ref{lm-r2}, $u''$ is solution to the problem
\bel{ia1c}
\left\{  \begin{array}{ll} -{\rm i} u'''(t,x) + H_{\ba}(t) u''(t,x) = f_2(t,x), & (t,x) \in Q_T^+  \\ u''(t,x) = 0, & (t,x) \in \Sigma_T^+ \\ u''(0,x) = - \Delta^2 u_0(x), & x \in \Omega, \end{array} \right.
\ee
with $f_2 :=-H_{\ba}''(t) u- 2 H_{\ba}'(t) u'$.
Finally, by substituting $u''$ (resp. $f_2$, $\Delta^2 u_0$ and \eqref{ia1c}) for $u'$ (resp. $f_1$, $\Delta u_0$ and \eqref{ia1b}) in the above reasoning, we end up getting that $u \in {\rm C}^2([0,T];\HH_2) \cap {\rm C}^3([0,T];\HH_0)$.

Further, $\ba$ and $\tilde{\ba}$ being taken in ${\bf A}(\ba_0,M)$, let $u$ and $\tilde{u}$ be the respective ${\rm C}^2([0,T];\HH_2) \cap {\rm C}^3([0,T];\HH_0)$-solutions to \eqref{ia1} and the system
\bel{ia2} \left\{  \begin{array}{ll} -{\rm i} \tilde{u}'(t,x) + H_{\tilde{\ba}}(t) \tilde{u}(t,x) = 0, & (t,x) \in Q_T^+ \\ \tilde{u}(t,x) = 0, & (t,x) \in \Sigma_T^+  \\ \tilde{u}(0,x) = u_0(x), & x \in \Omega. \end{array} \right.
\ee
Then, bearing in mind that $\nabla . \ba = \nabla . \tilde{\ba}=0$, we find using basic computations that
$v:=u- \tilde{u} \in \HH_1$ satisfies
\bel{ia3}
\left\{  \begin{array}{ll} -{\rm i} v'(t,x) + H_{\ba}(t) v(t,x) = f(t,x), & (t,x) \in Q_T^+  \\ v(t,x) = 0, & (t,x) \in \Sigma_T^+ \\ v(0,x) = 0, & x \in \Omega, \end{array} \right.
\ee
where
\bel{ia4}
f:= \chi (\tilde{\ba}-\ba) . \left( 2 {\rm i} \nabla + \chi (\ba+\tilde{\ba})  \right) \tilde{u} \in W^2(0,T;\HH_0,\HH_{-1}).
\ee
Since $f \in W(0,T;\HH_0)$, we have $v \in {\rm C}^0([0,T];\HH_2) \cap {\rm C}^1([0,T];\HH_0)$ by Lemma \ref{lm-r1}, and $w:= v'$ is thus solution to
\bel{ia6}
\left\{  \begin{array}{ll} -{\rm i} w'(t,x) + H_{\ba}(t) w(t,x) = g(t,x) := f'(t,x) - H_{\ba}'(t) v(t,x), & (t,x) \in Q_T^+ \\ w(t,x) = 0, & (t,x) \in \Sigma_T^+ \\ w(0,x) = 0, & x \in \Omega, \end{array}
\right.
\ee
according to Lemma \ref{lm-r2}. As $g \in W(0,T;\HH_0,\HH_{-1})$,  \eqref{ia6} and Lemma \ref{lm-r1} yield
$w \in {\rm C}^0([0,T];\HH_1)$, which, in turn, implies that $g$ actually belongs to $W(0,T;\HH_0)$. Therefore, we get that $w \in {\rm C}^0([0,T];\HH_2) \cap {\rm C}^1([0,T];\HH_0)$ and $y:=w'$ is solution to
\bel{ia9}
\left\{  \begin{array}{ll} -{\rm i} y'(t,x) + H_{\ba}(t) y(t,x) = q(t,x) := g'(t,x) - H_{\ba}'(t) w(t,x), & (t,x) \in Q_T^+  \\ y(t,x) = 0, & (t,x) \in \Sigma_T^+ \\ y(0,x) = -2 \chi'(0) (\tilde{\ba}(x)-\ba(x)) . \nabla  u_0(x), & x \in \Omega, \end{array} \right.
\ee
by arguing as before. Finally, taking into account that $(\tilde{\ba}-\ba) . \nabla  u_0 \in \HH_1$ (since $\ba$ and $\tilde{\ba}$ coincide in a neighborhood $\gamma_0$ of $\Gamma$) and $q \in W(0,T;\HH_0,\HH_{-1})$, we find out from \eqref{ia9} and Lemma \ref{lm-r1}, that $y$ is uniquely defined in ${\rm C}^0([0,T];\HH_1) \cap {\rm C}^1([0,T];\HH_{-1})$.

Though, in this framework, additional technical difficulties arise from the presence of first order spatial differential terms in the expression of $H_{\ba}(t)$, the method used in \S \ref{sec-li} to establish a stability estimate on $\ba$ is inspired from \cite{ImaYama01}. This method of symmetrization already used in \cite{BP} and \cite{CCG} for Schr\"odinger systems allows us to center the problem around the initial condition which is a given data in our problem and avoid the use of another data at  time $\theta >0$. This method  preliminarily requires that a Carleman inequality for the solution $y$ to \eqref{ia9} be established in $Q_T :=(-T,T) \times \Omega$. To this purpose we first extend $\chi$ in an odd function on $[-T,T]$, and introduce the corresponding operators $H_{\ba}(t)$ and $B_{\ba}(t)$ for every $t \in [-T,T]$. Then we extend $u$ (resp. $\tilde{u}$) on $Q_T^- :=(-T,0) \times \Omega$ by setting $u(t,x)=\overline{u(-t,x)}$  (resp. $\overline{\tilde{u}}(t,x)=\overline{\tilde{u}(-t,x)}$) for $(t,x) \in Q_T^-$, in such a way that $u$ (resp. $\tilde{u}$) satisfies \eqref{ia1} (resp. \eqref{ia2}) in $Q_T$. Since $u_0$ is real-valued by assumption, the mappings $t \mapsto u(t,x)$ and $t \mapsto \tilde{u}(t,x)$ are actually both continuous at $t=0$ for a.e. $x \in \Omega$.
Further, by extending the above definitions of $v$ and $f$ in $Q_T^-$, we get that $v(t,x) = \overline{v(-t,x)}$ and $f(t,x) = \overline{f(-t,x)}$ for all $(t,x) \in Q_T^-$. Hence $v$ is solution to \eqref{ia3} in $Q_T$, and, due to the initial condition $v(0,x)=0$, $t \mapsto v(t,x)$ is actually continuous at $t=0$ for a.e. $x \in \Omega$. Similarly we may define $w$ and $g$ on $Q_T^-$ from the extended definitions of $v$ and $f$, as $w(t,x)=-\overline{w(-t,x)}$ and $g(t,x)=-\overline{g(-t,x)}$ for $(t,x) \in Q_T^-$. This combined with the initial condition $w(0,x)=0$ guarantees that $t \mapsto w(t,x)$ is continuous at $t=0$ for a.e. $x \in \Omega$, and it is easy to check that $w$ is solution to \eqref{ia6} in $Q_T$. Finally, by setting $q(t,x) := \overline{q(-t,x)}$ and $y(t,x):=\overline{y(-t,x)}$ for $(t,x) \in Q_T^-$, we end up getting that $y$ is solution to \eqref{ia9} in $Q_T$. Moreover, in light of the identity $y(0,x)= -2 \chi'(0) (\tilde{\ba}(x)-\ba(x)) .  \nabla u_0(x)$ and the condition $u_0(x) \in \R$ imposed on $u_0$, the mapping $t \mapsto y(t,x)$ turns out to be continuous at $t=0$.

\subsection{Carleman estimate}
\label{sec-ce}

In this section we prove a Carleman inequality for the solution $y$ to the system \eqref{ia9}, extended to $Q_T$.
This is a powerful tool in the area of inverse problems which was introduced by A. L. Bugkheim and M. V. Klibanov in \cite{BK:81} (see also e.g. \cite{B:99} and \cite{K:92}). So far, the method defined in \cite{BK:81} is the only one enabling to prove uniqueness and stability results for inverse problems with finite measurement data in the multi-dimensional spatial case (i.e. $n \ge 2$).

Let us first recall some useful result borrowed from \cite{BP}, establishing a global Carleman estimate for any (sufficiently smooth) function $q$, defined in $Q_T$, which vanishes on $\Sigma_T:=(-T,T) \times \partial \Omega$, and the Schr\"odinger operator $L$ acting in $({\rm C}_0^{\infty})'(Q_T)$, as,
\bel{H}
L :={\rm i} \partial_t + \Delta.
\ee
This preliminarily requires that we introduce an open subset $\Gamma^+$ of $\Gamma$, together with some function $\tilde{\beta} \in {\rm C}^4(\overline{\Omega};\R_+)$, satisfying the following conditions:
\begin{assumption}
\label{funct-beta}
\begin{enumerate}[(a)]
\item $\exists C_0>0$ such that we have $|\nabla \tilde{\beta}(x)| \geq C_0$ for all $x \in \Omega$;
\item ${\partial}_{\nu} {\widetilde{\beta}}(\sigma) := \nabla {\tilde{\beta}}(\sigma). \nu (\sigma) \leq 0$ for all $\sigma \in \Gamma^- := \Gamma \backslash \Gamma^+$;
\item $\exists \Lambda_1>0$, $\exists \epsilon>0$  such that we have $\lambda |\nabla  \tilde{\beta}(x) . \zeta|^2 + D^2 \tilde{\beta} (\zeta, \bar{\zeta}) \geq \epsilon  |\zeta|^2$ for all $\zeta \in  \R^n$ and $\lambda  > \Lambda_1$, where $D^2 \tilde{\beta}=\left( \frac{\partial^2 \tilde{\beta}}{\partial x_i \partial x_j} \right)_{1 \leq i,j \leq n}$, and $D^2 \tilde{\beta} (\zeta, \bar{\zeta})$ denotes the $\C^n$-scalar product of $D^2 \tilde{\beta} \zeta$ with $\zeta$.
\end{enumerate}
\end{assumption}
Notice from Assumption \ref{funct-beta}(c) that $\tilde{\beta}$ is pseudo-convex with respect to the operator $-\Delta$. We refer to \cite{BP}[Formula (6)] for actual examples of an open subset $\Gamma^+$ and a weight $\tilde{\beta}$, fulfilling Assumption \ref{funct-beta}.

Further we put
\bel{defbeta}
\beta:= \widetilde{\beta}+K,\ {\rm where}\ K:= m \|\tilde{\beta}\|_{\infty}\ {\rm for\ some}\ m>1,
\ee
and then define for every $\lambda>0$ the two following weight functions:
\bel{defphieta}
\varphi(t,x)=\frac{{\rm e}^{\lambda  \beta(x)}}{(T+t)(T-t)}\ {\rm and}\ \eta(t,x)=\frac{{\rm e}^{2\lambda K} -{\rm e}^{\lambda  \beta(x)}}{(T+t)(T-t)},\ (t,x) \in Q_T.
\ee
Finally, following the idea of \cite{BP}[Proposition 3], we introduce two operators acting in $({\rm C}_0^{\infty})'(Q_T)$,
\bel{M1}
M_1 : = {\rm i} \partial_t + \Delta  + s^2 |\nabla \eta |^2\ {\rm and}\ M_2 : = {\rm i} s \eta' + 2 s \nabla \eta . \nabla  + s (\Delta \eta),
\ee
in such a way that $M_1+M_2={\rm e}^{-s \eta} L{\rm e}^{s \eta}$, where $L$ is given by \eqref{H}. Then, by arguing in the exact same way as in the derivation of \cite{BP}[Proposition 1], we obtain the:

\begin{proposition}
\label{pr-Carl}
Let $T$, $n$ and $\Omega$ be as in \S \ref{sec-wwaaf}, let $\beta$ be given by \eqref{defbeta}, where $\Gamma^+ \subset \Gamma$ and $\tilde{\beta} \in {\rm C}^4(\overline{\Omega};\R_+)$ fulfill Assumption \ref{funct-beta}, let $\varphi$ and $\eta$ be as in \eqref{defphieta}, and let
$L$, $M_1$ and $M_2$ be defined by \eqref{H}-\eqref{M1}. Then there exist three constants $\lambda_0> 0$, $s_0>0$, and $C_0=C_0(T,\Omega, \Gamma^+,\lambda_0,s_0)>0$, such that we have
\bel{Carl}
I(q) \leq C_0 \left( s \lambda \int_{-T}^T \int_{\Gamma^+} {\rm e}^{-2s \eta(t,\sigma)} \varphi(t,\sigma) \partial_{\nu} \beta(\sigma) | \partial_{\nu} q(t,\sigma) |^2 {\rm d} \sigma {\rm d} t + \| {\rm e}^{-s \eta} L q \|_{{\rm L}^2(Q_T)}^2 \right),
\ee
for all $\lambda \geq \lambda_0$, all $s \geq s_0$, and all $q \in {\rm L}^2(-T,T;\HH_1)$ satisfying $L q \in {\rm L}^2(Q_T)$ and $\partial_{\nu} q \in  {\rm L}^2(-T,T;{\rm L}^2(\Gamma))$, where
\bel{defiq}
I(q) := s^{3} \lambda^{4} \| {\rm e}^{-s \eta} \varphi^{3 \slash 2} q \|_{{\rm L}^2(Q_T)}^2 +
s \lambda \| {\rm e}^{-s \eta} \varphi^{1 \slash 2} | \nabla q | \|_{{\rm L}^2(Q_T)}^2 + \sum_{j=1,2} \| M_j e^{-s \eta}q \|_{{\rm L}^2(Q_T)}^2.
\ee
\end{proposition}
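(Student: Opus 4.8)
The plan is to follow the conjugated-operator method of \cite{BP}[Proposition 1] essentially verbatim, the key observation being that $L$ in \eqref{H} is the \emph{free} Schr\"odinger operator, with every magnetic contribution relegated to the source term $Lq$. I would first establish \eqref{Carl} for $q$ smooth and vanishing on $\Sigma_T$, and then reach the general class $q \in {\rm L}^2(-T,T;\HH_1)$ with $Lq \in {\rm L}^2(Q_T)$ and $\partial_\nu q \in {\rm L}^2(-T,T;{\rm L}^2(\Gamma))$ by a density argument. The opening move is the conjugation $\psi := {\rm e}^{-s\eta} q$; since $\eta$ is smooth up to $\Gamma$ and $q$ vanishes on $\Sigma_T$, so does $\psi$, and a direct computation using $\nabla \eta = -\lambda \varphi \nabla \beta$ (read off from \eqref{defphieta}) gives ${\rm e}^{-s \eta} L ({\rm e}^{s \eta} \psi) = M_1 \psi + M_2 \psi$ with $M_1, M_2$ as in \eqref{M1}.

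Squaring in ${\rm L}^2(Q_T)$ then yields $\| M_1 \psi \|^2 + \| M_2 \psi \|^2 + 2 \Pre{\langle M_1 \psi , M_2 \psi \rangle} = \| {\rm e}^{-s \eta} L q \|^2$, the first two terms already belonging to $I(q)$ in \eqref{defiq}. The heart of the matter is the evaluation of the cross term $\Pre{\langle M_1 \psi , M_2 \psi \rangle}$ through repeated integration by parts in $t$ and in $x$. The boundary contributions at $t = \pm T$ all vanish: since $m>1$ in \eqref{defbeta} forces ${\rm e}^{2\lambda K} > {\rm e}^{\lambda \beta}$ on $\overline{\Omega}$, the weight $\eta$ is positive and blows up like $((T+t)(T-t))^{-1}$ as $t \to \pm T$, so ${\rm e}^{-s\eta}$ and all its weighted derivatives decay super-polynomially there. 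Using $\psi_{|\Sigma_T}=0$, hence $\nabla \psi = (\partial_\nu \psi)\, \nu$ on $\Sigma_T$, the spatial boundary terms collapse to a single surface integral over $\Gamma$ proportional to $s \lambda \varphi\, \partial_\nu \beta\, | \partial_\nu \psi |^2$.

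The surviving interior integrals produce, at leading order in $(s,\lambda)$, exactly the two dominant blocks $s^3 \lambda^4 \| {\rm e}^{-s \eta} \varphi^{3 \slash 2} q \|^2$ and $s \lambda \| {\rm e}^{-s \eta} \varphi^{1 \slash 2} | \nabla q | \|^2$ of $I(q)$. Here Assumption \ref{funct-beta} enters decisively: part (c), the pseudo-convexity inequality $\lambda | \nabla \tilde{\beta} . \zeta |^2 + D^2 \tilde{\beta}(\zeta,\bar{\zeta}) \geq \epsilon | \zeta |^2$, bounds the Hessian form from below and fixes the sign of the gradient block for $\lambda > \Lambda_1$, while part (a), $| \nabla \tilde{\beta} | \geq C_0$, keeps the coefficient of the $\varphi^3$-block strictly positive; all remaining terms carry strictly lower powers of $s$ or $\lambda$ and are absorbed by choosing $\lambda_0, s_0$ large enough. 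For the boundary term I would split $\Gamma = \Gamma^+ \cup \Gamma^-$: on $\Gamma^-$ part (b) gives $\partial_\nu \beta = \partial_\nu \tilde{\beta} \leq 0$, so that piece has the favourable sign and is discarded in the lower bound, leaving only the $\Gamma^+$ integral. Substituting $\partial_\nu \psi = {\rm e}^{-s\eta} \partial_\nu q$ on $\Sigma_T$ and returning to $q = {\rm e}^{s\eta}\psi$ then rearranges the identity into \eqref{Carl}.

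The main obstacle is the bookkeeping in the cross-term expansion: the first-order transport operator $2 s \nabla \eta . \nabla$ in $M_2$, tested against $\Delta$, ${\rm i}\partial_t$ and $s^2 | \nabla \eta |^2$ in $M_1$, generates a large number of integrals, and the delicate point is to isolate the positive dominant block and to verify that every residual term is genuinely of lower order in $s$ and $\lambda$, hence absorbable. The pseudo-convexity hypothesis of Assumption \ref{funct-beta}(c) is the structural ingredient that makes this positivity work.
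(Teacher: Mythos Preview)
Your proposal is correct and follows exactly the route the paper takes: the paper does not give an independent argument but simply states that one proceeds ``in the exact same way as in the derivation of \cite{BP}[Proposition 1]'', and the conjugated-operator computation you outline (set $\psi={\rm e}^{-s\eta}q$, expand $\|M_1\psi+M_2\psi\|^2$, integrate the cross term by parts, use Assumption~\ref{funct-beta}(a)--(c) to secure the dominant interior terms and to discard the $\Gamma^-$ boundary contribution) is precisely that derivation. There is nothing to add.
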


We turn now to proving a Carleman estimate for the solution $y$ to the system \eqref{ia9} in $Q_T$, i.e.
\bel{ia10}
\left\{  \begin{array}{ll} -{\rm i} y'(t,x) + H_{\ba}(t) y(t,x) = q(t,x), & (t,x) \in Q_T  \\ y(t,x) = 0, & (t,x) \in \Sigma_T := (-T,T) \times \Gamma \\ y(0,x) = -2 \chi'(0) (\tilde{\ba}(x)-\ba(x)) . \nabla  u_0(x), & x \in \Omega, \end{array} \right.
\ee
with the aid of Proposition \ref{pr-Carl}.
\begin{proposition}
\label{pr-Carly}
Let $n$ and $\Omega$ be as in Lemma \ref{lm-r1}, let $\beta$, $\varphi$ and $\eta$ be the same as in Proposition \ref{pr-Carl}, let $\chi$ fulfill \eqref{defchi}, let $u_0$ satisfy \eqref{cuz}, let $\ba$ and $\tilde{\ba}$ belong to ${\bf A}(\ba_0,M)$, and let $I(y)$ be defined by \eqref{defiq}, where $y$ denotes the solution to \eqref{ia10}.
Then there exist three constant $\lambda_0> 0$, $s_0>0$, and $C_1=C_1(T, M, \Omega, \Gamma^+,\lambda_0,s_0,u_0)>0$, such that we have
\beas
I(y) & \leq & C_1 \left( s \lambda \sum_{\rho=y,w} \int_{-T}^T \int_{\Gamma^+} {\rm e}^{-2s \eta(t,\sigma)} \varphi(t,\sigma) \partial_{\nu} \beta(\sigma)  | \partial_{\nu} \rho(t,\sigma) |^2 {\rm d} \sigma {\rm d} t \right. \\
& & \ \ \ \ \ \ \ \left. + \| {\rm e}^{-s \eta}
(\tilde{\ba}-\ba) \|_{{\rm L}^2(Q_T)^n}^2 \right),
\eeas
for any $\lambda \geq \lambda_0$ and any $s \geq s_0$.
\end{proposition}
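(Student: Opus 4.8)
The plan is to derive the desired inequality from the pure-Schr\"odinger Carleman estimate of Proposition \ref{pr-Carl} by treating the magnetic and coupling terms as perturbations to be absorbed. Writing $H_{\ba}(t) = -\Delta + 2 {\rm i} \chi(t) \ba \cdot \nabla + \chi(t)^2 |\ba|^2$ (using the Coulomb gauge $\nabla \cdot \ba = 0$, cf. \eqref{r1d}), the first line of \eqref{ia10} is equivalent to $L y = -q + 2 {\rm i} \chi \ba \cdot \nabla y + \chi^2 |\ba|^2 y$, where $L = {\rm i} \partial_t + \Delta$. Since $y$, and likewise $w$, lies in ${\rm L}^2(-T,T;\HH_1)$, vanishes on $\Sigma_T$, and has an $L^2$ normal trace and an ${\rm L}^2(Q_T)$ image under $L$ by the regularity established in \S \ref{sec-ip}, Proposition \ref{pr-Carl} applies to both. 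Because the source $q = g' - H_{\ba}'(t) w$ in \eqref{ia10} involves $w = v'$, I would apply \eqref{Carl} \emph{simultaneously} to $y$ and to $w$ (whose equation \eqref{ia6} has source $g = f' - H_{\ba}'(t) v$), and add the two resulting inequalities; this is exactly why boundary terms for both $\rho=y$ and $\rho=w$ occur in the conclusion.

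The right-hand side then contains $\| {\rm e}^{-s\eta} L y \|_{{\rm L}^2(Q_T)}^2 + \| {\rm e}^{-s\eta} L w \|_{{\rm L}^2(Q_T)}^2$. I would expand these using $q = f'' - H_{\ba}''(t) v - 2 H_{\ba}'(t) w$ and $g = f' - H_{\ba}'(t) v$, and classify the contributions. The genuinely magnetic first- and zeroth-order terms in $y$ and in $w$ (arising from $2{\rm i}\chi\ba\cdot\nabla$ and $\chi^2|\ba|^2$), together with the term $H_{\ba}'(t) w$, are controlled pointwise by $\mathcal{A}_0$, $\mathcal{A}_1$ and $|\nabla y|,|y|$ (resp. $|\nabla w|,|w|$) through \eqref{r14}. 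Since $\varphi$ is bounded below by a positive constant on $Q_T$ for $\lambda \geq \lambda_0$, each such term is dominated by $(s\lambda)^{-1}$ times the gradient part, and by $(s^3\lambda^4)^{-1}$ times the zeroth-order part, of $I(y)$ or $I(w)$; hence for $s,\lambda$ large enough they are absorbed into the left-hand side $I(y)+I(w)$.

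The main obstacle is the coupling terms $H_{\ba}''(t) v$ and $H_{\ba}'(t) v$, which involve the lower time-derivative $v$, for which no Carleman estimate (and in particular no boundary term) is admissible in the target inequality. Here I would exploit that $v(0,x)=0$ and $v'=w$, so $v(t,x)=\int_0^t w(\tau,x)\,{\rm d}\tau$, together with the crucial monotonicity of the weight in time: as $\eta(t,x)=({\rm e}^{2\lambda K}-{\rm e}^{\lambda\beta(x)})/(T^2-t^2)$ has positive numerator, $\eta(\cdot,x)$ is minimal at $t=0$, whence ${\rm e}^{-2s\eta(t,x)}\le {\rm e}^{-2s\eta(\tau,x)}$ for $0\le\tau\le t$ (and symmetrically for $t<0$). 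Combined with Cauchy--Schwarz this gives ${\rm e}^{-2s\eta(t,x)}|v(t,x)|^2 \le |t|\int_0^{|t|}{\rm e}^{-2s\eta(\tau,x)}|w(\tau,x)|^2\,{\rm d}\tau$, and the same with $\nabla v,\nabla w$ in place of $v,w$. Integrating over $Q_T$ bounds $\|{\rm e}^{-s\eta}v\|_{{\rm L}^2(Q_T)}^2$ and $\|{\rm e}^{-s\eta}\nabla v\|_{{\rm L}^2(Q_T)}^2$ by $T^2$ times $\int_{Q_T}{\rm e}^{-2s\eta}|w|^2$ and $\int_{Q_T}{\rm e}^{-2s\eta}|\nabla w|^2$, each of which is again $(s^3\lambda^4)^{-1}$ (resp. $(s\lambda)^{-1}$) times a part of $I(w)$; so the $v$-contributions are absorbable as well for large $s,\lambda$.

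It remains to handle the forcing terms $f'$ and $f''$. From \eqref{ia4}, $f$ is $(\tilde{\ba}-\ba)$ multiplied by $\chi$, its derivatives, and first-order-in-space quantities built from $\tilde{u}$; since $\tilde{u}\in{\rm C}^2([0,T];\HH_2)$ by the regularity of \S \ref{sec-ip} and the smoothness of $u_0$ in \eqref{cuz}, the $\tilde{u}$-factors are uniformly bounded, giving $\|{\rm e}^{-s\eta}f'\|_{{\rm L}^2(Q_T)}^2 + \|{\rm e}^{-s\eta}f''\|_{{\rm L}^2(Q_T)}^2 \le C\|{\rm e}^{-s\eta}(\tilde{\ba}-\ba)\|_{{\rm L}^2(Q_T)^n}^2$ with $C$ depending on $T,M,\chi,u_0$. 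Collecting the surviving boundary terms, this forcing bound, and absorbing all perturbative terms into $I(y)+I(w)$, one obtains $I(y)+I(w)$ bounded by $C_1$ times the boundary integrals for $y,w$ plus $\|{\rm e}^{-s\eta}(\tilde{\ba}-\ba)\|_{{\rm L}^2(Q_T)^n}^2$; discarding the nonnegative $I(w)$ on the left yields the asserted estimate for $I(y)$.
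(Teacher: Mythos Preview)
Your proposal is correct and follows the same overall architecture as the paper's proof: apply Proposition~\ref{pr-Carl} to $w$ and $y$ separately, absorb the magnetic first-order terms and the $w,y$ contributions into $I(w)+I(y)$ via the positive lower bound on $\varphi$, and reduce the source terms $f',f''$ to $\|{\rm e}^{-s\eta}(\tilde{\ba}-\ba)\|_{{\rm L}^2(Q_T)^n}^2$ through the regularity of $\tilde u$ (the paper cites Proposition~\ref{pr-ec} at this last step). The one substantive difference lies in your handling of the $v$-terms. The paper invokes Lemma~\ref{lm-klibanov}, an integration-by-parts argument that gains an explicit factor $\kappa/s$, to bound $\|{\rm e}^{-s\eta}(|v|^2+|\nabla v|^2)^{1/2}\|_{{\rm L}^2(Q_T)}^2$ by $\tfrac{\kappa}{s}$ times the corresponding $w$-quantity; you instead use only the monotonicity $\eta(\tau,x)\le\eta(t,x)$ for $|\tau|\le|t|$ together with Cauchy--Schwarz, obtaining the same bound with constant $T^2$ in place of $\kappa/s$. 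Both versions suffice here, since the subsequent absorption of the $w$-quantity into $I(w)$ via the $\varphi$-lower-bound already provides the smallness in $s,\lambda$; your route is therefore a mild simplification that bypasses Lemma~\ref{lm-klibanov} entirely.
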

\begin{proof}
In light of \eqref{H} we see that the first equation in \eqref{ia6} (extended to $Q_T$) reads
$L w = 2 \chi \ba . ( {\rm i}  \nabla +\chi \ba ) w - g$, the r.h.s. of this equality being in ${\rm L}^2(Q_T)$.
From this and Proposition \ref{pr-Carl} then follows that
\bea
I(w) \leq C_0 \left( s \lambda \int_{-T}^T \int_{\Gamma^+} {\rm e}^{-2s \eta(t,\sigma)} \varphi(t,\sigma) |\partial_{\nu} w(t,\sigma)|^2\ \partial_{\nu} \beta(t,\sigma) \ {\rm d} \sigma {\rm d} t \right. \nonumber  \\
\left. \ \ \ \ + \| {\rm e}^{-s \eta} f' \|_{{\rm L}^2(Q_T)}^2+ \sum_{\rho=v,w} \| {\rm e}^{-s \eta} (|\rho|^2 + | \nabla \rho |^2)^{1\slash 2} \|_{{\rm L}^2(Q_T)}^2 \right), \label{carl1}
\eea
provided $\lambda$ and $s$ are taken sufficiently large.
Similarly, we deduce from \eqref{ia10} that
\bea
I(y) \leq C_0 \left( s \lambda \int_{-T}^T \int_{\Gamma^+} {\rm e}^{-2s \eta(t,\sigma)} \varphi(t,\sigma) |\partial_{\nu} y(t,\sigma)|^2\ \partial_{\nu} \beta(t,\sigma) \ {\rm d} \sigma {\rm d} t \right. \nonumber  \\
\left. \ \ \ \ \ + \| {\rm e}^{-s \eta} f'' \|_{{\rm L}^2(Q_T)}^2+ \sum_{\rho=v,w,y} \| {\rm e}^{-s \eta} (|\rho|^2 + | \nabla \rho |^2)^{1\slash 2} \|_{{\rm L}^2(Q_T)}^2 \right), \label{carl2}
\eea
the constant $C_0$ being the same as in \eqref{carl1}.

Further, by applying Lemma \ref{lm-klibanov}, whose proof is postponed to the end of \S \ref{sec-ce}, successively to $v(x,t) = \int^t_{0} w(\xi,x) {\rm d} \xi$ and $\nabla v(x,t) = \int^t_{0} \nabla w(\xi,x) {\rm d} \xi$, we find out for all $\lambda \geq \lambda_0$ and all $s >0$, that
$$
\| {\rm e}^{-s \eta}\  (|v|^2 + |\nabla v|^2)^{1 \slash 2} \|_{{\rm L}^2(Q_T)}^2 \leq
\frac{\kappa}{s} \| {\rm e}^{-s \eta}\  (|w|^2 + |\nabla w|^2)^{1 \slash 2} \|_{{\rm L}^2(Q_T)}^2,
$$
where the constant $\kappa=\kappa(T,\lambda_0) >0$ depends only on $T$ and $\lambda_0$. Therefore, upon choosing $s$ sufficiently large and eventually substituting $2 C_0$ for $C_0$, we may actually remove $v$ from the sum in the r.h.s. of both \eqref{carl1} and \eqref{carl2}. Moreover $\| {\rm e}^{-s \eta} (|\rho|^2 + | \nabla \rho |^2)^{1\slash 2} \|_{{\rm L}^2(Q_T)}^2$, for $\rho=w,y$, being made arbitrarily small w.r.t. $I(\rho)$, by taking $s$ large enough, according to \eqref{defbeta}-\eqref{defphieta} and \eqref{defiq}, we may rewrite \eqref{carl1} and \eqref{carl2} as, respectively,
$$
I(w) \leq C_2 \left( s \lambda \int_{-T}^T \int_{\Gamma^+} {\rm e}^{-2s \eta(t,\sigma)} \varphi(t,\sigma) |\partial_{\nu} w(t,\sigma)|^2\ \partial_{\nu} \beta(t,\sigma) \ {\rm d} \sigma {\rm d} t + \| {\rm e}^{-s \eta} f' \|_{{\rm L}^2(Q_T)}^2 \right),
$$
and
\beas
I(y) & \leq & C_2 \left( s \lambda \int_{-T}^T \int_{\Gamma^+} {\rm e}^{-2s \eta(t,\sigma)} \varphi(t,\sigma) |\partial_{\nu} y(t,\sigma)|^2\ \partial_{\nu} \beta(t,\sigma) \ {\rm d} \sigma {\rm d} t  \right. \\
& & \ \ \ \ \ \ \left. + \| {\rm e}^{-s \eta} f'' \|_{{\rm L}^2(Q_T)}^2 + I(w) \right).
\eeas
Finally the result follows from this, the two following expressions,
$$f'=(\tilde{\ba}-\ba) . [ \chi  (\tilde{\ba}+\ba) (2\chi'\tilde{u} + \chi \tilde{u}') + 2 {\rm i}( \chi'  \nabla \tilde{u} + \chi \nabla \tilde{u}')],$$
and
$$f''=(\tilde{\ba}-\ba) . [ 2 {\rm i} ( \chi'' \nabla \tilde{u} + 2 \chi' \nabla \tilde{u}' + \chi \nabla \tilde{u}'' ) + (\tilde{\ba}+\ba) ( 2 \chi'^2 + \chi \chi'') \tilde{u} + 4 \chi \chi' \tilde{u}' + \chi^2 \tilde{u}'' ], $$
arising from \eqref{ia4} by direct calculation, and Proposition \ref{pr-ec} (in the particular case where the source term $f=0$).
\end{proof}

To complete the proof of Proposition \ref{pr-Carly}, it remains to establish Lemma \ref{lm-klibanov}. Its proof, although very similar to the one of \cite{KT:04}[Lemma 2.1] or \cite{KT}[Lemma 3.1.1], is detailed below for the convenience of the reader.

\begin{lemma}
\label{lm-klibanov}
Let $T$, $n$ and $\Omega$ be as in \S \ref{sec-wwaaf}, and let $\eta$ be defined by \eqref{defphieta}.
Then for all $\lambda_0>0$, there exists a constant $\kappa=\kappa(T,\lambda_0)>0$ depending only on $T$ and $\lambda_0$, such that we have
$$ \int_{-T}^T \int_{\Omega} {\rm e}^{-2s \eta(t,x)} \left| \int_0^t p(\xi,x) {\rm d} \xi \right|^2 {\rm d} x {\rm d} t \leq \frac{\kappa}{s} \| {\rm e}^{-s \eta} p \|_{{\rm L}^2(Q_T)}^2, $$
for every $p \in {\rm L}^2(Q_T)$, all $\lambda \geq \lambda_0$ and all $s>0$.
\end{lemma}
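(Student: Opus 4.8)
The plan is to establish the inequality by a direct Fubini-and-Cauchy-Schwarz argument, exploiting the crucial fact that the weight $\eta(t,x)$ attains its minimum in $t$ at $t=0$ for each fixed $x$, which is precisely where the integral $\int_0^t p(\xi,x)\,{\rm d}\xi$ is anchored. First I would fix $x \in \Omega$ and work with the one-dimensional-in-time quantity
$$ J(x) := \int_{-T}^T {\rm e}^{-2s \eta(t,x)} \left| \int_0^t p(\xi,x)\,{\rm d}\xi \right|^2 {\rm d} t, $$
with the aim of bounding $J(x)$ by $(\kappa \slash s) \int_{-T}^T {\rm e}^{-2s \eta(\xi,x)} |p(\xi,x)|^2\,{\rm d}\xi$, after which integrating over $\Omega$ via Fubini yields the claim. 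Applying Cauchy-Schwarz to the inner integral against the measure ${\rm e}^{-2s \eta(\xi,x)}\,{\rm d}\xi$, I would write
$$ \left| \int_0^t p(\xi,x)\,{\rm d}\xi \right|^2 \leq \left( \int_0^t {\rm e}^{2s \eta(\xi,x)}\,{\rm d}\xi \right) \left( \int_0^t {\rm e}^{-2s \eta(\xi,x)} |p(\xi,x)|^2\,{\rm d}\xi \right), $$
where the integrals are understood with the appropriate orientation when $t<0$.

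The heart of the matter is then to control the product, after the outer $t$-integration, of the factor ${\rm e}^{-2s\eta(t,x)}$ with the first bracket above. The key structural feature is that, for fixed $x$, the function $t \mapsto \eta(t,x)$ decreases on $(-T,0)$ and increases on $(0,T)$, so that $\eta(\xi,x) \leq \eta(t,x)$ whenever $\xi$ lies between $0$ and $t$. This monotonicity gives the pointwise bound ${\rm e}^{2s\eta(\xi,x)} \leq {\rm e}^{2s\eta(t,x)}$ on the relevant interval, whence
$$ {\rm e}^{-2s\eta(t,x)} \int_0^t {\rm e}^{2s \eta(\xi,x)}\,{\rm d}\xi \leq \int_0^t {\rm e}^{2s(\eta(\xi,x)-\eta(t,x))}\,{\rm d}\xi. $$
To extract the gain of $1 \slash s$, I would estimate the difference $\eta(t,x)-\eta(\xi,x)$ from below by a quantity proportional to $|t-\xi|$ (or to $t^2-\xi^2$), using the explicit form $\eta(t,x) = ({\rm e}^{2\lambda K}-{\rm e}^{\lambda\beta(x)})\slash(T^2-t^2)$ and the fact that the numerator is bounded below by a positive constant uniformly in $x$ (since $\beta = \tilde\beta + K$ with $K = m\|\tilde\beta\|_\infty$, $m>1$, forces ${\rm e}^{2\lambda K} - {\rm e}^{\lambda\beta(x)} > 0$). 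This positive lower bound, depending only on $T$ and $\lambda_0$ for $\lambda \geq \lambda_0$, turns the $\xi$-integral into an integral of a decaying exponential, which is $O(1\slash s)$ uniformly in $t$ and $x$.

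I expect the main obstacle to be making the lower bound on $\eta(t,x)-\eta(\xi,x)$ both uniform in $x\in\Omega$ and strong enough to furnish the $1\slash s$ factor. Concretely, one must show there is a constant $c=c(T,\lambda_0)>0$ with $\eta(t,x)-\eta(\xi,x) \geq c\,(t-\xi)$ (for $0 \leq \xi \leq t$, and symmetrically on the other side), which amounts to controlling $\partial_t \eta(t,x)$ from below away from $t=0$ while handling the degeneracy as $t\to 0$; the time-symmetric structure of $\eta$ and the sign of its numerator make this tractable but require care near the origin. Once the bound $\int_0^t {\rm e}^{-2sc(t-\xi)}\,{\rm d}\xi \leq 1\slash(2sc)$ is in hand, Fubini in the remaining double integral over $(t,\xi)$ and $\Omega$ collapses everything to $(\kappa\slash s)\|{\rm e}^{-s\eta}p\|_{{\rm L}^2(Q_T)}^2$, which is the desired conclusion.
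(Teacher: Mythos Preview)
Your strategy correctly identifies the key monotonicity of $\eta(\cdot,x)$ about $t=0$, but the crucial step fails precisely at the degeneracy you flag. The linear lower bound $\eta(t,x)-\eta(\xi,x)\geq c(t-\xi)$ with $c>0$ independent of $t$ does \emph{not} hold: since
\[
\partial_t\eta(t,x)=\frac{2t\,\alpha(x)}{(T^2-t^2)^2},\qquad \alpha(x)={\rm e}^{2\lambda K}-{\rm e}^{\lambda\beta(x)}>0,
\]
one has $\partial_t\eta(0,x)=0$, and in fact $\eta(t,x)-\eta(\xi,x)=\alpha(x)(t^2-\xi^2)/\bigl((T^2-t^2)(T^2-\xi^2)\bigr)$ is only of order $t^2-\xi^2$ near the origin. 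As a consequence the quantity $G(t):=\int_0^t {\rm e}^{2s(\eta(\xi,x)-\eta(t,x))}\,{\rm d}\xi$ satisfies $G(t)\sim t$ for $t\lesssim s^{-1/2}$ and $G(t)\sim (st)^{-1}$ for $t\gtrsim s^{-1/2}$, so $\sup_t G(t)$ is only $O(s^{-1/2})$, and even after the Fubini step one gets $\int_0^T G(t)\,{\rm d}t=O(s^{-1}\log s)$, not $O(s^{-1})$. Your weighted Cauchy--Schwarz therefore loses exactly one logarithm compared with the stated bound.

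The paper's proof repairs this by using the \emph{unweighted} Cauchy--Schwarz inequality $\bigl|\int_0^t p\,{\rm d}\xi\bigr|^2\leq |t|\int_0^t|p|^2\,{\rm d}\xi$; the gained factor $|t|$ is precisely what compensates the vanishing of $\partial_t\eta$ at $t=0$. Indeed, from the formula for $\partial_t\eta$ one has, for $t>0$,
\[
t\,{\rm e}^{-2s\eta(t,x)}\leq \frac{T^4}{2\alpha_0}\,{\rm e}^{-2s\eta(t,x)}\partial_t\eta(t,x)
= -\frac{T^4}{4s\alpha_0}\,\partial_t\bigl({\rm e}^{-2s\eta(t,x)}\bigr),
\]
and a single integration by parts in $t$ (on a truncated interval $(-T+\delta,T-\delta)$, with boundary terms vanishing as $\delta\downarrow 0$ since ${\rm e}^{-2s\eta}\to 0$ at $t=\pm T$) delivers the exact $1/s$ factor. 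If you want to salvage your route, replace the linear lower bound by the quadratic one $\eta(t,x)-\eta(\xi,x)\geq c(t^2-\xi^2)$, accept the resulting Dawson-type integral, and note that the final estimate is then $\kappa s^{-1}\log s$ rather than $\kappa s^{-1}$; this still suffices for the application in Proposition~\ref{pr-Carly}, but does not prove the lemma as stated.
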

\begin{proof}
In light of \eqref{defbeta}-\eqref{defphieta}, it holds true that
\bel{kli1}
\partial_t \eta(t,x) = \frac{2t \alpha(x)}{(T+t)^2 (T-t)^2}\ {\rm with}\ \alpha(x)={\rm e}^{2 \lambda K} - {\rm e}^{\lambda \beta(x)} \geq \alpha_0 := {\rm e}^{2 \lambda_0 K} - {\rm e}^{\lambda_0 K \slash m}>0,
\ee
for every $(t,x) \in Q_T$ and $\lambda \geq \lambda_0$. Further, for all $\delta \in (0,T)$, the integral $I_{\delta} := \int_{-T+\delta}^{T-\delta} \int_{\Omega} {\rm e}^{-2s \eta(t,x)} \left| \int_0^t p(\xi,x) {\rm d} \xi \right|^2 {\rm d} x {\rm d} t$ satisfying
$$
I_{\delta}  \leq \int_{-T+\delta}^{T+\delta} \int_{\Omega} {\rm e}^{-2s \eta(t,x)} t  \left( \int_0^t | p(\xi,x) |^2 {\rm d} \xi \right) {\rm d} x {\rm d} t,
$$
from the Cauchy-Schwarz inequality, we get from \eqref{kli1} that
\bel{kli2}
I_{\delta} \leq \frac{2 T^4}{\alpha_0}  \int_{\Omega} \int_{-T+\delta}^{T-\delta}
{\rm e}^{-2s \eta(t,x)} \partial_t \eta(t,x)  \left( \int_0^t | p(\xi,x) |^2 {\rm d} \xi \right) {\rm d} t {\rm d} x.
\ee
Moreover, for a.e $x \in \Omega$, an integration by parts show that
\beas
& & \int_{-T+\delta}^{T-\delta} {\rm e}^{-2s \eta(t,x)} \partial_t \eta(t,x)  \left( \int_0^t | p(\xi,x) |^2 {\rm d} \xi \right) {\rm d} t \\
& = & \frac{1}{2 s} \left(  \int_{-T+\delta}^{T-\delta} {\rm e}^{-2s \eta(t,x)} | p(t,x) |^2  {\rm d} t  - \tilde{p}(T-\delta,x) + \tilde{p}(-T+\delta,x) \right),
\eeas
where $\tilde{p}(t,x):= {\rm e}^{-2s \eta(t,x)} \left( \int_0^{t} | p(\xi,x) |^2 {\rm d} \xi \right)$.
This together with \eqref{kli2} yields
\bea
I_{\delta} & \leq & \frac{T^4}{\alpha_0 s} \left(  \int_{-T+\delta}^{T-\delta} \int_{\Omega}
{\rm e}^{-2s \eta(t,x)} | p(t,x) |^2   {\rm d} x {\rm d} t \right. \nonumber \\
& & \ \ \ \ \ \ \ \ \ \left.- 2(T-\delta) \int_{\Omega} (\tilde{p}(T-\delta,x) + \tilde{p}(-T+\delta,x) ) {\rm d} x \right).
\label{kli3}
\eea
Since
$\int_{\Omega} \tilde{p}(\pm(T-\delta),x) {\rm d} x \leq {\rm e}^{-2 s \alpha_0 \slash (\delta (2T-\delta))} \| p \|_{{\rm L}^2(-T,T;\Omega)}^2$ by \eqref{defphieta} and \eqref{kli1}, we get that
$$\lim_{\delta \downarrow 0} (T-\delta) \int_{\Omega} (\tilde{p}(T-\delta,x) + \tilde{p}(-T+\delta,x) ) {\rm d} x=0,$$
and the result follows from this, by taking the limit as $\delta \downarrow 0$ in \eqref{kli3}.
\end{proof}

\subsection{Global Lipschitz stability inequality: proof of Theorem \ref{thm-stab}}
\label{sec-li}
In this section we establish the Lipschitz-type stability inequality for admissible magnetic potential vectors $\ba \in {\bf A}(\ba_0,M)$, stated in Theorem \ref{thm-stab}.
The proof essentially relies on the following:
\begin{lemma}
\label{lm-1}
Under the assumptions of Proposition \ref{pr-Carly}, let $y$ denote the ${\rm C}^0([-T,T];\HH_1)$-solution to \eqref{ia10} and put $\mathcal{I} :=\| {\rm e}^{-s \eta(0,.)} y(0,.) \|_{{\rm L}^2(\Omega)}^2$. Then there are three constants $\lambda_0>0$, $s_0>0$, and $C_3=C(T,M,\Omega, \Gamma^+, \lambda_0,s_0,u_0)>0$, such that we have
\beas
\mathcal{I} & \leq & C_3  s^{-1/2} \lambda^{-1} \left(  \int_{-T}^T \int_{\Gamma^+} {\rm e}^{-2s \eta(t,\sigma)} \varphi(t,\sigma) \partial_{\nu} \beta(t,\sigma) |\partial_{\nu} y(t,\sigma)|^2 {\rm d} \sigma  {\rm d} t \right. \nonumber \\
 & & \ \ \ \ \ \ \ \ \ \ \ \ \ \ \ \ \ \ \left. + s^{-1} \lambda^{-1} \| {\rm e}^{-s \eta(0,.)} ( \tilde{\ba}-\ba ) \|_{{\rm L}^2(\Omega)^n}^2 \right),
\eeas
for all $\lambda \geq \lambda_0$ and $s \geq s_0$.
\end{lemma}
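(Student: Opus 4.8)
The plan is to work with the Carleman-conjugated function $z := {\rm e}^{-s \eta} y$, so that $\mathcal{I} = \| z(0,\cdot) \|_0^2$, and to cash in the vanishing of $z$ at the temporal endpoints. Indeed $\inf_{x \in \Omega} \eta(t,x) \to + \infty$ as $t \to \pm T$ while $t \mapsto \| y(t) \|_0$ stays bounded (recall $y \in {\rm C}^0([-T,T];\HH_1)$), so $\| z(t) \|_0^2 \leq \left( \sup_{x} {\rm e}^{-2 s \eta(t,x)} \right) \| y(t) \|_0^2 \to 0$ as $t \to \pm T$. Using $\| z(T) \|_0 = 0$ I would then write
$$ \mathcal{I} = \| z(0) \|_0^2 = - \int_0^T \frac{{\rm d}}{{\rm d} t} \| z(t) \|_0^2\, {\rm d} t = - 2 \int_0^T \mathrm{Re}\, \langle z'(t), z(t) \rangle_{-1,1}\, {\rm d} t . $$

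The heart of the argument is to trade $z'$ for $M_1 z$. From $M_1 = {\rm i} \partial_t + \Delta + s^2 | \nabla \eta |^2$ one has ${\rm i} z' = M_1 z - \Delta z - s^2 | \nabla \eta |^2 z$, hence $\langle z', z \rangle = - {\rm i} \big( \langle M_1 z, z \rangle - \langle \Delta z, z \rangle - s^2 \langle | \nabla \eta |^2 z, z \rangle \big)$. Since $z_{| \Sigma_T} = 0$ (inherited from $y$), integration by parts gives $\langle \Delta z, z \rangle = - \| \nabla z \|_0^2 \in \R$, and $\langle | \nabla \eta |^2 z, z \rangle \in \R$ is automatic; as $\mathrm{Re}(- {\rm i} r) = 0$ for real $r$ while $\mathrm{Re}(- {\rm i} \zeta) = \mathrm{Im}\, \zeta$ in general, the contributions of these two real quantities cancel and $\mathrm{Re}\, \langle z', z \rangle = \mathrm{Im}\, \langle M_1 z, z \rangle$. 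Therefore
$$ \mathcal{I} = - 2 \int_0^T \mathrm{Im}\, \langle M_1 z(t), z(t) \rangle_0\, {\rm d} t \leq 2 \| M_1 z \|_{{\rm L}^2(Q_T)} \| z \|_{{\rm L}^2(Q_T)} $$
by Cauchy-Schwarz. Both factors are pieces of $I(y)$: by \eqref{defiq}, $\| M_1 z \|_{{\rm L}^2(Q_T)}^2 = \| M_1 {\rm e}^{-s \eta} y \|_{{\rm L}^2(Q_T)}^2 \leq I(y)$, while $\varphi \geq {\rm e}^{\lambda K} T^{-2} > 0$ on $Q_T$ gives $\| z \|_{{\rm L}^2(Q_T)}^2 = \int_{Q_T} {\rm e}^{-2 s \eta} | y |^2 \leq C \int_{Q_T} {\rm e}^{-2 s \eta} \varphi^3 | y |^2 \leq C s^{-3} \lambda^{-4} I(y)$. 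Combining, $\mathcal{I} \leq C s^{-3/2} \lambda^{-2} I(y)$.

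It remains to feed in the Carleman estimate. Bounding $I(y)$ by Proposition \ref{pr-Carly}, whose right-hand side carries the $\Gamma^+$-boundary flux of $y$ with prefactor $s \lambda$ together with $\| {\rm e}^{-s \eta} (\tilde{\ba} - \ba) \|_{{\rm L}^2(Q_T)^n}^2$, the product of $s^{-3/2} \lambda^{-2}$ with $s \lambda$ produces precisely the announced prefactor $s^{-1/2} \lambda^{-1}$ in front of the boundary term, and the potential term keeps the residual $s^{-3/2} \lambda^{-2} = s^{-1/2} \lambda^{-1} \cdot s^{-1} \lambda^{-1}$. Finally I would replace the space-time potential norm by its value at $t=0$: the numerator ${\rm e}^{2 \lambda K} - {\rm e}^{\lambda \beta}$ of $\eta$ is positive (here $m > 1$ is used, via $2K > \beta$) and $(T+t)(T-t) \leq T^2$, so $\eta(t,x) \geq \eta(0,x)$, whence ${\rm e}^{-2 s \eta(t,x)} \leq {\rm e}^{-2 s \eta(0,x)}$ and $\| {\rm e}^{-s \eta} (\tilde{\ba} - \ba) \|_{{\rm L}^2(Q_T)^n}^2 \leq 2 T \| {\rm e}^{-s \eta(0,\cdot)} (\tilde{\ba} - \ba) \|_{{\rm L}^2(\Omega)^n}^2$. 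This yields the claimed inequality for all $s \geq s_0$ and $\lambda \geq \lambda_0$.

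I expect the delicate point to be the identity $\mathrm{Re}\, \langle z', z \rangle = \mathrm{Im}\, \langle M_1 z, z \rangle$: it has to be read in the right duality ($z'(t) \in \HH_{-1}$ paired against $z(t) \in \HH_1$, with $M_1 z \in {\rm L}^2(Q_T)$ granted only a posteriori through the Carleman bound), and one must justify differentiating $t \mapsto \| z(t) \|_0^2$ and performing the spatial integration by parts in that weak framework, presumably from the regularity $y \in {\rm C}^0([-T,T];\HH_1) \cap {\rm C}^1([-T,T];\HH_{-1})$ obtained in \S \ref{sec-ip} together with a density argument. Everything else is bookkeeping of the powers of $s$ and $\lambda$.
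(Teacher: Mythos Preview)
Your proof is correct and follows essentially the same route as the paper's: set $z={\rm e}^{-s\eta}y$, exploit the vanishing of $z$ at a temporal endpoint to write $\mathcal I$ as a time integral of $\frac{{\rm d}}{{\rm d}t}\|z\|_0^2$, use the identity $\mathrm{Re}\,\langle z',z\rangle=\mathrm{Im}\,\langle M_1 z,z\rangle$ (the $\Delta$ and $|\nabla\eta|^2$ contributions are real and drop out), apply Cauchy--Schwarz, absorb both factors into $I(y)$ via the lower bound on $\varphi$, and conclude with Proposition~\ref{pr-Carly} together with $\eta(t,x)\geq\eta(0,x)$. The only cosmetic difference is that the paper integrates over $[-T,0]$ using $z(-T,\cdot)=0$ whereas you integrate over $[0,T]$ using $z(T,\cdot)=0$; both are valid. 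One small slip: Proposition~\ref{pr-Carly} actually bounds $I(y)$ by boundary fluxes of \emph{both} $w$ and $y$ (not $y$ alone), and the paper's own proof of the lemma carries the sum $\sum_{\rho=w,y}$ through to the end, consistently with how the estimate is used downstream in \eqref{mc82}; the lemma statement's omission of the $w$-term appears to be a typo in the paper rather than something you need to reproduce.
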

\begin{proof}
Set $\psi := {\rm e}^{-s \eta} y$. Bearing in mind that $\psi(-T,.)=0$, we find out that
$$ \mathcal{I} = \int_{-T}^{0} \int_{\Omega} \partial_t |\psi(t,x)|^2 {\rm d} x {\rm d} t = 2\Pre{\int_{-T}^{0} \int_{\Omega} \psi'(t,x) \overline{\psi(t,x)} {\rm d} x {\rm d} t}, $$
whence
\beas
\mathcal{I} & = & 2\Pim{\int_{-T}^{0} \int_{\Omega} \left( {\rm i} \psi'(t,x)  +\Delta \psi(t,x) + s^2 |\nabla \eta(t,x)|^2 \psi(t,x)  \right) \overline{\psi(t,x)} {\rm d} x {\rm d} t}  \\
& =& 2 \Pim{\int_{-T}^0 \int_{\Omega} M_1 \psi(t,x) \overline{\psi(t,x)} {\rm d} t {\rm d} x}.
\eeas
Therefore
$|\mathcal{I}| \leq 2 \| M_1 \psi \|_{{\rm L}^2(Q_T)} \| \psi \|_{{\rm L}^2(Q_T)}$ from the Cauchy-Schwarz inequality. Since $\inf \{ \varphi(t,x),\ (t,x) \in Q_T \} >0$ by \eqref{defphieta}, there is thus a constant $C_4>0$ depending only on $T$ and $\lambda_0$, such that we have
$$ |\mathcal{I}| \leq C_4 s^{-3 \slash 2} \lambda^{-2} \left( s^3 \lambda^4 \| {\rm e}^{-s \eta} \varphi^{3 \slash 2} y \|_{{\rm L}^2(Q_T)}^2  + \| M_1 ( {\rm e}^{-s \eta} y) \|_{{\rm L}^2(Q_T)}^2 \right), $$
for all $s \geq s_0$ and $\lambda \geq \lambda_0$.
This, combined with \eqref{defiq} and Proposition \ref{pr-Carly}, yields
\beas
|\mathcal{I}| &\leq & C_5 s^{-1 \slash 2} \lambda^{-1} \left(  \sum_{\rho=w,y}\int_{-T}^{T} \int_{\Gamma^+} {\rm e}^{-2s \eta(t,\sigma)} \varphi(t,\sigma) \partial_{\nu} \beta(t,\sigma)  | \partial_{\nu} \rho(t,\sigma) |^2 {\rm d} \sigma {\rm d} t \right. \nonumber \\
& & \ \ \ \ \ \ \ \ \ \ \ \ \ \ \ \ \left. + s^{-1} \lambda^{-1} \| {\rm e}^{-s \eta} (\tilde{\ba}-\ba) \|_{{\rm L}^2(Q_T)^n}^2  \right),
\eeas
the constant $C_5>0$ depending on $\Omega$, $\Gamma^+$, $T$, $M$, $\lambda_0$, $s_0$ and $u_0$. Now the result follows immediately from this since $\eta(t,x) \geq \eta(0,x)$ for all $(t,x) \in Q_T$ by \eqref{defphieta}.
\end{proof}

Armed with Lemma \ref{lm-1}, we turn now to proving Theorem \ref{thm-stab}.
Since $\mathcal{I}= 4 \chi'(0)^2 \| {\rm e}^{{-s \eta}(0,.)} (\tilde{\ba}-\ba) . \nabla u_0 \|_0^2$ from \eqref{ia9}, Lemma \ref{lm-1} assures us that there are three constants  $\lambda_0>0$, $s_0>0$ and $C_6>0$, such that we have
\bea
& & C_6 \left( \| {\rm e}^{{-s \eta}(0,.)} (\tilde{\ba}-\ba) . \nabla u_0 \|_{{\rm L}^2(\Omega)}^2 - s^{-3/2} \lambda^{-2} \| {\rm e}^{{-s \eta}(0,.)} ( \tilde{\ba}-\ba ) \|_{{\rm L}^2(\Omega)^n}^2  \right) \nonumber \\
&\leq & s^{-1/2} \lambda^{-1} \sum_{\rho=w,y} \int_{-T}^T \int_{\Gamma^+} {\rm e}^{-2s \eta(t,\sigma)} \varphi(t,\sigma) \partial_{\nu} \beta(t,\sigma) |\partial_{\nu} \rho(t,\sigma)|^2 {\rm d} \sigma {\rm d} t,
\label{mc82}
\eea
for all $s \geq s_0$ and $\lambda \geq \lambda_0$.
In light of the definitions of $w$, $y$, and those of $u_{0,j}$, $u_j$, $\tilde{u}_j$, for $j=1,\ldots,n$, \eqref{mc82} entails
\beas
& & C_6 \left( \| {\rm e}^{{-s \eta}(0,.)} (\tilde{\ba}-\ba ) . \nabla u_{0,j} \|_{{\rm L}^2(\Omega)}^2 - s^{-3/2} \lambda^{-2} \| {\rm e}^{{-s \eta}(0,.)} ( \tilde{\ba}-\ba ) \|_{{\rm L}^2(\Omega)^n}^2  \right) \\
&\leq & s^{-1/2} \lambda^{-1} \int_{-T}^T \int_{\Gamma^+} {\rm e}^{-2s \eta(t,\sigma)} \varphi(t,\sigma) \partial_{\nu} \beta(t,\sigma) \left( \sum_{k=1,2}|\partial_{\nu} \partial_t^k (\tilde{u}_j-u_j)(t,\sigma)|^2 \right) {\rm d} \sigma {\rm d} t.
\eeas
Summing up the above estimate over $j=1,\ldots,n$, we get that
\bea
& &
C_6 \left( \| {\rm e}^{{-s \eta}(0,.)} DU_0 ( \tilde{\ba}-\ba ) \|_{{\rm L}^2(\Omega)^n}^2 - n s^{-3/2} \lambda^{-2} \| {\rm e}^{{-s \eta}(0,.)} (\tilde{\ba}-\ba) \|_{{\rm L}^2(\Omega)^n}^2  \right)
\nonumber \\
&\leq &
  s^{-1/2} \lambda^{-1} \int_{-T}^T \int_{\Gamma^+} {\rm e}^{-2s \eta(t,\sigma)} \varphi(t,\sigma) \partial_{\nu} \beta(t,\sigma) \left( \sum_{\tiny \begin{array}{c} j=1,\ldots,n \\ k=1,2 \end{array}}|\partial_{\nu} \partial_t^k (\tilde{u}_j-u_j)(t,\sigma)|^2 \right) {\rm d} \sigma {\rm d} t,  \nonumber \\
&&\label{mc100}
\eea
where the $n \times n$ real matrix $DU_0(x)$, for $x \in \Omega$, is the same as in \eqref{icc}. Notice that
we have
\bel{mc101}
\| DU_0(x) v \|_{\C^n} \geq \mu_1(x) \| v \|_{\C^n},\ x \in \Omega, \ v \in  \C^n,
\ee
where $\{ \mu_j(x) \}_{j=1}^n \subset  R_+^n$ denotes the non decreasing sequence of the singular values of $DU_0(x)$, and $\| v \|_{\C^n}$ stands for the Euclidian norm of $v$.
Moreover each $u_{0,j}$, $j=1,\ldots,n$, being taken in ${\rm H}^{p}(\Omega)$ with $p>n\slash 2+1$, it holds true that $u_{0,j} \in {\rm C}^1(\overline{\Omega})$, whence $\mu_1 \in {\rm C}^0(\overline{\Omega};\R_+)$ from \cite{K}[Theorem II.5.1]. This, combined with \eqref{icc}, yields $\mu:= \inf_{x \in \Omega} \mu_1(x) >0$, the constant $\mu$ depending on $\Omega$ and $\{ u_{0,j} \}_{j=1}^n$ only. As a consequence we have
$\| {\rm e}^{{-s \eta}(0,.)} DU_0 ( \tilde{\ba}-\ba ) \|_{{\rm L}^2(\Omega)^n} \geq  \mu \| {\rm e}^{{-s \eta}(0,.)} (\tilde{\ba}-\ba) \|_{{\rm L}^2(\Omega)^n}$ by \eqref{mc101}, and Theorem \ref{thm-stab} follows directly from this and \eqref{mc100} by choosing $s$ so large that $n s^{-3/2} \lambda_0^{-2} < \mu^2$.\\

\section*{Acknowledgement(s)}The authors are thankful to M. Bellassoued for fruitful discussions concerning this problem and valuable comments.

\end{document}